\newcommand{\Z}{\mathbb Z}
\newcommand{\N}{\mathbb N}
\newcommand{\Q}{\mathbb Q}
\newcommand{\EE}{\mathcal{E}}
\newcommand{\PP}{\mathcal{P}}
\newtheorem{theorem}{Theorem}[section]
\newtheorem{corollary}[theorem]{Corollary}
\newtheorem{lemma}[theorem]{Lemma}
\newtheorem{proposition}[theorem]{Proposition}
\newtheorem{definition}[theorem]{Definition}
\newtheorem{remark}[theorem]{Remark}
\def\be{\begin{equation}}
\def\ee{\end{equation}}
\begin{document}
\title{Dynamics of continued fractions and kneading sequences of unimodal maps}
\author{ 
C. Bonanno\thanks{Dipartimento di Matematica Applicata,
Universit\`a di Pisa, via F. Buonarroti 1/c, I-56127 Pisa, Italy,
email: $<$bonanno@mail.dm.unipi.it$>$}\and
C. Carminati\thanks{Dipartimento di Matematica,
Universit\`a di Pisa, Largo Bruno Pontecorvo 5, I-56127, Italy,
email: $<$carminat@dm.unipi.it$>$}\and
S. Isola
\thanks{Dipartimento di Matematica e Informatica, Universit\`a
di Camerino, via Madonna delle Carceri, I-62032 Camerino, Italy.
e-mail: $<$stefano.isola@unicam.it$>$}\and
G. Tiozzo\thanks{Department of Mathematics, Harvard University, One Oxford Street Cambridge MA 02138 USA, e-mail: $<$tiozzo@math.harvard.edu$>$}
}

\maketitle

\begin{abstract}
In this paper we construct a correspondence between the parameter spaces of two families of one-dimensional dynamical systems,
 the $\alpha$-continued fraction transformations $T_\alpha$ and unimodal maps. This correspondence identifies 
bifurcation parameters in the two families, and allows one to transfer topological and metric properties
from one setting to the other. As an application, we recover results about the 
real slice of the Mandelbrot set, and the set of univoque numbers.
\end{abstract}

\section{Introduction}

The goal of this paper is to discuss an unexpected connection between
the parameter spaces of two families of one-dimensional dynamical
systems, and establish an explicit correspondence between the bifurcation
parameters for these families.

The family $(T_\alpha)_{\alpha \in (0,1]}$ of $\alpha$-continued
  fraction transformations, defined in \cite{Na}, is a family of discontinuous 
interval maps, which generalize the well-known Gauss map. For each $\alpha \in (0,1]$, the map $T_\alpha$
from the interval $[\alpha-1, \alpha]$ to itself is defined as $T_\alpha(0) = 0$ and, for $x \neq 0$,  
$$T_\alpha(x) := \frac{1}{|x|} - c_{\alpha, x}$$
where $c_{\alpha, x} = \left\lfloor \frac{1}{|x|} + 1 - \alpha \right\rfloor$ is a positive integer.
For every $\alpha \in [0,1]$ at most two of the infinitely many
branches of $T_\alpha$ fail to be surjective, hence $T_\alpha$ has
infinite topological entropy. On the other hand, for all $\alpha>0$
the map $T_\alpha$ admits a unique invariant probability measure
$\mu_\alpha$ which is absolutely continuous with respect to Lebesgue
measure, and one can consider the metric entropy $h=h(T_\alpha,
\mu_\alpha)$ with respect to this measure; $h$ can be
computed by Rohlin's formula  and is finite (see \cite{LM}).

Several authors have studied the variation of $h$
as a function of the parameter $\alpha$. 
Numerical evidence 
that the entropy $h$ is continuous but non-monotone was  first produced in \cite{LM}. 
Subsequently, \cite{NN} found out that the entropy is monotone over intervals in parameter space 
for which the orbits of the two endpoints collide after a finite number of steps (see Equation \eqref{matching} on page \pageref{matching}).
This analysis is completed in \cite{CT}, where intervals of parameters 
for which such relation holds are classified, and it is proven that 
the union of all such intervals has full measure.
The complementary set, denoted by $\EE$, is the set of parameters across which 
the combinatorics of $T_\alpha$ changes, hence it will  
be called the \emph{bifurcation set}.





The second object  we consider is the family of \emph{unimodal maps}, i.e. smooth maps of the interval with only one critical point,
the most famous example being the logistic family.
To any such map one can associate a \emph{kneading invariant} \cite{MT} which encodes the dynamics of the critical point
and determines the combinatorial type of the map. Using an appropriate coding (see \cite{IP}, \cite{Is}) the set of all 
kneading invariants  which arise from unimodal maps can be represented as the points of a set $\Lambda$
defined in terms of the \emph{tent map} $T(x):=\min\{2x, 2(1-x)\}$ 

\begin{equation} \label{lambdachar}
\Lambda := \{ x \in [0, 1] : T^k(x) \leq x \ \forall k \in \mathbb{N} \}
\end{equation}

The set $\Lambda$ is  uncountable, totally disconnected and closed; the topology of $\Lambda$ reflects the variation 
in the dynamics of the corresponding maps as the parameter varies: for instance families of isolated points in $\Lambda$ 
correspond to period doubling cascades (see Section \ref{pedou}).
This set also parametrizes the real slice of the boundary of the Mandelbrot set $\mathcal{M}$ (see Section \ref{mandel}), 
because for each admissible kneading sequence there exists exactly one real parameter on the boundary with
 that kneading sequence. 

The key result of this paper (Section \ref{main}) is the following correspondence between the bifurcation 
parameters of the two families:

\begin{theorem} \label{mth} The sets $\Lambda \setminus \{0\}$ and $\mathcal{E}$ are homeomorphic. More precisely, the map
$\varphi: [0, 1] \rightarrow [\frac{1}{2},1]$ given by 
$$ x = {1\over \displaystyle a_1 + {1\over
\displaystyle a_2 + {1\over\displaystyle a_3 +{1\over\ddots }}}}
\mapsto \varphi(x) = 0.\underbrace{11\ldots1}_{a_1}\underbrace{00\ldots0}_{a_2}\underbrace{11\ldots1}_{a_3}\ldots $$
is an orientation-reversing homeomorphism which takes $\mathcal{E}$ onto $\Lambda\setminus\{0\}$.
\end{theorem}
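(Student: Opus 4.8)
The plan is to realize $\varphi$ as a \emph{run-length coding} between partial quotients and binary runs, prove by elementary order considerations that it is an orientation-reversing homeomorphism, and then observe that this same coding conjugates the Gauss map to the tent map $T$, thereby converting the defining inequality of $\Lambda$ into a dynamical description of $\mathcal{E}$. Throughout, write $x = [a_1, a_2, a_3, \dots]$ for the continued fraction expansion and let $G$ denote the Gauss map $[a_1, a_2, \dots] \mapsto [a_2, a_3, \dots]$.

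\emph{Step 1: $\varphi$ is an orientation-reversing homeomorphism onto $[\tfrac12,1]$.} The value $x$ decreases in the odd-indexed digits and increases in the even-indexed ones, while on the binary side increasing $a_k$ lengthens the $k$-th run, raising $\varphi(x)$ when $k$ is odd (a run of $1$'s) and lowering it when $k$ is even (a run of $0$'s). Comparing two expansions at their first discrepant digit shows that in either parity $\varphi$ reverses the continued-fraction order, so $\varphi$ is strictly decreasing on the irrationals. Agreement of the first $n$ partial quotients forces agreement of the first $a_1 + \cdots + a_n$ binary digits, which gives continuity; conversely every $y \in (\tfrac12,1)$ has a binary expansion beginning with $1$ that splits uniquely into alternating runs $1^{a_1}0^{a_2}1^{a_3}\cdots$, which gives surjectivity. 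The two continued-fraction representations of a rational correspond precisely to the two binary representations of a dyadic rational (pad a finite $[a_1,\dots,a_n]$ with one infinite final run), so the one-sided limits agree and $\varphi$ extends to a continuous strictly decreasing bijection with $\varphi(0) = 0.\overline{1} = 1$ and $\varphi(1) = 0.1\overline{0} = \tfrac12$.

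\emph{Step 2: $\varphi$ intertwines $G$ and $T$.} On binary expansions the tent map acts by $T(0.0\,b_2 b_3\cdots) = 0.b_2 b_3 \cdots$ and $T(0.1\,b_2 b_3 \cdots) = 0.\overline{b_2}\,\overline{b_3}\cdots$. Applying this to $\varphi(x) = 0.1^{a_1}0^{a_2}1^{a_3}\cdots$ yields $T(\varphi(x)) = 0.0^{a_1-1}1^{a_2}0^{a_3}\cdots$, and the next $a_1-1$ iterates merely strip the leading zeros, so $T^{a_1}(\varphi(x)) = 0.1^{a_2}0^{a_3}\cdots = \varphi(Gx)$. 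Hence $T^{a_1}\circ \varphi = \varphi \circ G$, and iterating shows that the iterates of $\varphi(x)$ lying in $[\tfrac12,1]$ occur exactly at the times $a_1, a_1+a_2, a_1+a_2+a_3,\dots$ and equal $\varphi(G^n x)$, whereas every intermediate iterate begins with $0$ and is therefore $<\tfrac12 \le \varphi(x)$. Consequently $T^k(\varphi(x)) \le \varphi(x)$ for all $k$ if and only if $\varphi(G^n x) \le \varphi(x)$ for all $n \ge 1$, which by the monotonicity of Step 1 is equivalent to $G^n x \ge x$ for all $n \ge 1$. Thus $\varphi(x) \in \Lambda$ exactly when $x$ is the minimum of its forward Gauss orbit; since $\varphi$ never attains the value $0$ (and indeed $\Lambda \cap (0,\tfrac12) = \emptyset$, which is why $\{0\}$ must be removed from $\Lambda$), this reads $\varphi^{-1}(\Lambda \setminus \{0\}) = \{ x : G^n x \ge x \ \forall\, n \ge 1\}$.

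\emph{Step 3: identification with $\mathcal{E}$ and the main obstacle.} It remains to match the Gauss-orbit condition with the bifurcation set, i.e. to show $\mathcal{E} = \{ \alpha \in (0,1] : G^n(\alpha) \ge \alpha \ \forall\, n \ge 1\}$; combined with Step 2 this gives $\varphi(\mathcal{E}) = \Lambda \setminus \{0\}$, and Step 1 makes the restriction a homeomorphism. The coding computations of Steps 1 and 2 are routine; the crux, and the step I expect to be the main obstacle, is Step 3. Here one must translate the \emph{geometric} matching relation of \cite{NN,CT} (the collision of the forward orbits of the two endpoints $\alpha-1$ and $\alpha$ of $[\alpha-1,\alpha]$ under $T_\alpha$) into the clean one-dimensional inequality $G^n(\alpha) \ge \alpha$, relying on the classification of matching intervals in \cite{CT}. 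One must also check that the exceptional rational/quadratic cases align on the two sides: the endpoints of the matching intervals should correspond under $\varphi$ precisely to the isolated points of $\Lambda$ arising from period-doubling cascades, so that the bijection of Step 1 restricts to a homeomorphism exactly between $\mathcal{E}$ and $\Lambda \setminus \{0\}$.
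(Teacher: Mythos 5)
Your Steps 1 and 2 are correct, and they are a hands-on version of what the paper does more compactly: the paper writes $\varphi = {?}\circ \Psi_1$ with $\Psi_1(x)=1/(1+x)$, notes that Minkowski's question mark conjugates the Farey map to the tent map ($?\circ F = T\circ{?}$), and thereby reduces the theorem to its Lemma \ref{characterizeE}; your run-length identity $T^{a_1}\circ\varphi = \varphi\circ G$ and the observation that intermediate iterates fall below $\tfrac12$ reprove, in effect, the equivalence of conditions (b) and (d) of that lemma. So the coding part of your argument is sound and essentially the same mechanism as the paper's.

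The genuine gap is your Step 3, which you defer rather than prove: the identity $\mathcal{E} = \{x\in(0,1] : G^n(x)\geq x\ \forall n\geq 1\}$ is not a technical verification to be outsourced -- it is where all the content of the theorem sits, and it is exactly the equivalence (a)$\Leftrightarrow$(b) of Lemma \ref{characterizeE}. Moreover, your plan for closing it points in a slightly wrong direction. You propose to ``translate the geometric matching relation'' (collision of the $T_\alpha$-orbits of $\alpha$ and $\alpha-1$) into the inequality; but $\mathcal{E}$ is \emph{defined} in Equation \eqref{Edef} as the complement of the union of quadratic intervals $I_r$, whose endpoints are explicit periodic continued fractions, so no $T_\alpha$-dynamics enters at all -- going through matching would add a layer (the identification of quadratic intervals with matching intervals, itself a theorem of \cite{CT}) rather than remove one. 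What is actually needed are two continued-fraction arguments: (i) if $x$ lies in a maximal interval with pseudocenter $a=[0;A^-]=[0;A^+]$ and endpoints $[0;\overline{A^-}],[0;\overline{A^+}]$, one exhibits an iterate $G^m(x)<x$ using the parity of the lengths of $A^\pm$ and the monotone approximation $[0;(A^+)^n]\nearrow [0;\overline{A^+}]$; (ii) if $x\in\mathcal{E}$, then $G^k(x)\geq x$, proved first for endpoints $x=[0;\overline{a_1,\dots,a_n}]$ of maximal intervals via the cyclic-ordering inequality of \cite{CT} (Proposition 4.5), and then extended to all of $\mathcal{E}$ by a density-and-limit argument, which in turn requires knowing that $\mathcal{E}$ has empty interior (\cite{CT}, Theorem 1.2). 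Note also that this limiting step is delicate in your formulation: $G^k$ is discontinuous (at rationals), which is precisely why the paper passes to the continuous Farey map $F$ -- condition (c) of its lemma -- before taking limits. As it stands, your proposal is a correct reduction of the theorem to this identity, not a proof of it.
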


As a consequence, intervals in the parameter space of $\alpha$-continued fractions where a matching 
between the orbits of the endpoints occurs are in one-to-one correspondence with real hyperbolic components of the Mandelbrot set, 
i.e. intervals in the parameter space of real quadratic polynomials where the orbit of the critical point is attracted to a 
periodic cycle. In terms of entropy, intervals over which the metric entropy of $\alpha$-continued fractions is monotone 
(and conjecturally smooth) are mapped to parameter intervals in the space of quadratic polynomials where the topological 
entropy is constant (see figure \ref{NakadavsMandelbrot}). 
For instance, the matching interval $([0; \overline{3}], [0; \overline{2, 1}])$, identified
in \cite{LM} and \cite{NN}, corresponds to the ``airplane component'' of period $3$ in the Mandelbrot set. 


This dictionary illuminates many connections between seemingly unrelated 
objects in real dynamics, complex dynamics, and arithmetic, which we will explore in the rest of the paper. 
For example, the following properties of $\Lambda$ follow immediately by using the combinatorial tools developed in \cite{CT}:




\begin{enumerate}
\item[(i)] the set $\Lambda$ can be constructed via a bisection algorithm (Section \ref{bisec})
\item[(ii)] the sequences of isolated points in $\mathcal{E}$ observed experimentally in (\cite{CMPT}, Section 4.2) are the images 
of the well-known period doubling cascades for unimodal maps (Section \ref{pedou})
\item[(iii)] the derived set $\Lambda'$ is a Cantor set (Section \ref{top})
\item[(iv)] the Hausdorff dimension of $\Lambda$ is $1$ (Section \ref{HD})
\end{enumerate}

\begin{SCfigure} \label{NakadavsMandelbrot}
\includegraphics[scale=0.2]{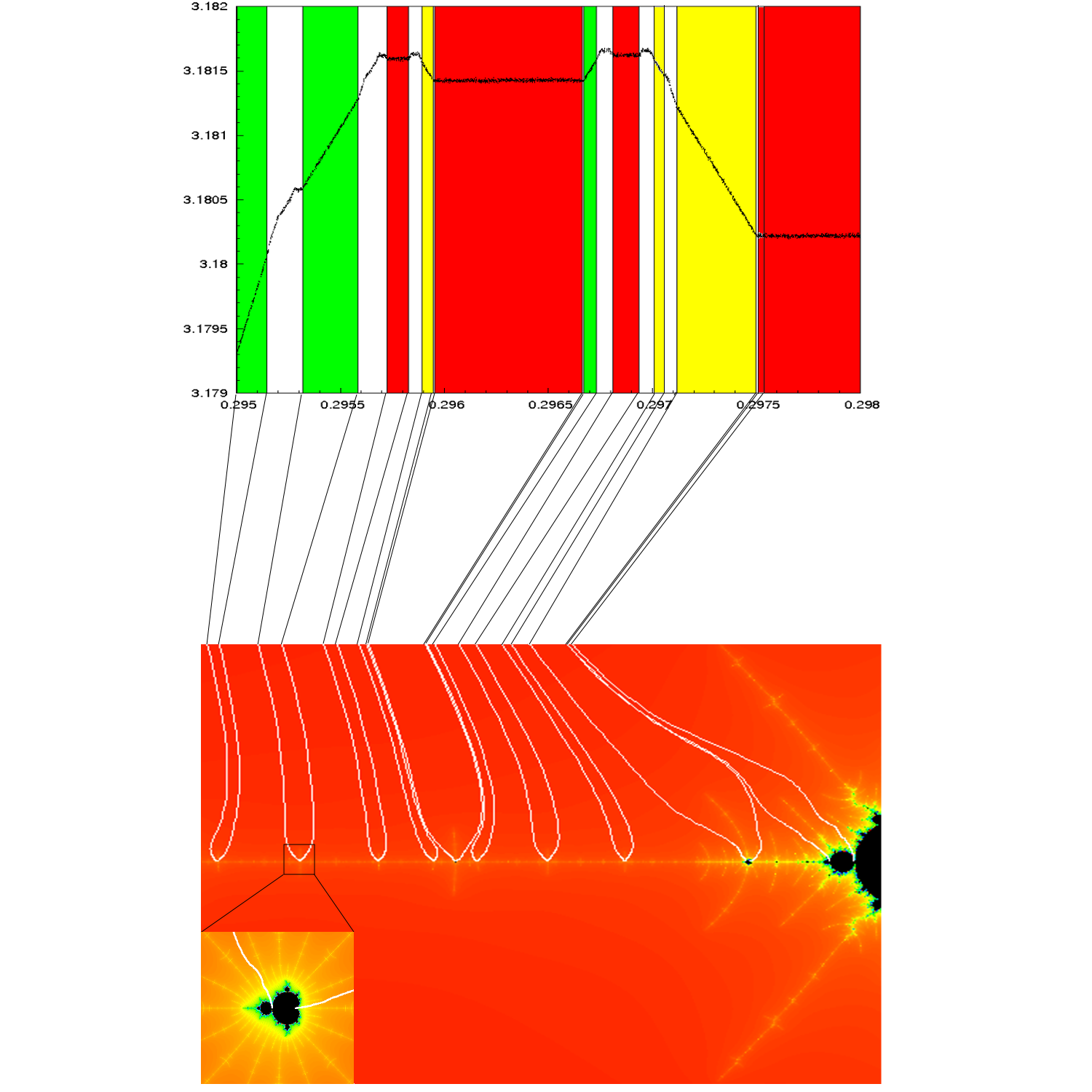}

\caption{Correspondence between the parameter space of $\alpha$-continued fraction transformations and 
the Mandelbrot set. On the top: the entropy of $\alpha$-c.f. as a function of $\alpha$, from \cite{LM}; 
colored strips correspond to \emph{matching intervals}. At the bottom: a section of the Mandelbrot set 
along the real line, with external rays landing on the real axis. 
Matching intervals on the top figure correspond to hyperbolic components on the bottom.} 
\end{SCfigure}


From the above properties of $\Lambda$ we derive some consequences on corresponding sets.
 For example, (iv) implies that the set of external rays of the Mandelbrot set which land on
 the real axis has full Hausdorff dimension (Section \ref{mandel}), a result first obtained in \cite{Za}.


Moreover, in Section \ref{univ} we give an arithmetic
interpretation of $\Lambda$. Namely, aperiodic elements of
$\Lambda$ correspond to binary expansions of \emph{univoque numbers},
i.e. the numbers $q \in (1, 2)$ such that $1$ admits a unique
representation in base $q$. Univoque numbers have been studied by
Erd\"os, Horv\'ath and Jo\'o \cite{EHJ} and many others
(see Section \ref{univ} for references), and once again the translation of
statements (i)-(iv) immediately yields several results previously
obtained by different authors.


\vskip 0.3 cm

Moreover, our new characterization of the set $\EE$ (see Lemma
\ref{characterizeE}) shows that it  is essentially the same object
appearing in a paper of Cassaigne as the spectrum of recurrence
quotients for cutting sequences of geodesics on the torus (\cite{Ca},
Theorem 1.1). Indeed, our results on $\EE$ answer some questions
raised in \cite{Ca}, such as the computation of Hausdorff
dimension. 


Finally, we would like to emphasize that the correspondence 
of Theorem \ref{mth} opens up many questions, and it is especially natural to ask to what
 extent results in the well-developed theory of unimodal maps can be translated into the continued fraction setting.
For instance, it would be interesting to identify an analogue of renormalization for the $\alpha$-continued fractions, 
and to further explore the relation between the entropy of that family (\cite{LM}, \cite{CMPT}, \cite{Ti}, \cite{KSS}) 
and the topological entropy of unimodal maps (\cite{MT} and \cite{Do} among others).

\section*{Acknowledgements}
 We wish to thank J-P. Allouche and H. Bruin for their helpful comments. G.T. wishes to thank C. McMullen 
for many useful discussions. C.B. is sponsored by project MIUR - PRIN2009 ``Variational and topological methods in the study of nonlinear phenomena", Italy, and by the ``Distinguished Scientist Fellowship Program (DSFP)", King Saud University, Riyadh, Saudi Arabia.
The lower half of Figure 1 has been generated using the software Mandel by W. Jung.







\section{Preliminaries} \label{prelim}

\noindent
Let $T,F, G$ denote the \emph{tent map}, the \emph{Farey map} and the \emph{Gauss map}  of $[0,1]$, given by\footnote{Here $
\lfloor x \rfloor$ and $\{x\}$ denote the integer and the fractional part of $x$, respectively, so that $x=\lfloor x \rfloor+\{x\}$.
}
$$
 T(x):=\left\{
\begin{array}{cl}
\displaystyle 2x  & {\rm if}\ 0\leq x< \frac{1}{2} \\[0.5cm]
 \displaystyle 2(1-x) & {\rm if}\ \frac{1}{2} \leq  x\leq 1
\end{array} \right.  
\ \ \ \ 
  F(x):=\left\{
\begin{array}{cl}
\displaystyle \frac{x}{1-x}  & {\rm if}\ 0\leq x< \frac{1}{2} \\[0.5cm]
\displaystyle \frac{1-x}{x}& {\rm if}\ \frac{1}{2} \leq  x\leq 1
\end{array} \right.
$$  
 and $G(0):=0$, $G(x) := \left\{ \frac 1 x \right\}, \quad x\ne 0$.
The action of $F$ and $T$ can be nicely illustrated with different symbolic codings of numbers. Given $x\in [0,1]$ we can expand it in (at least) two ways: using a \emph{continued fraction expansion}, i.e.
$$
 x = {1\over \displaystyle a_1 + {1\over
\displaystyle a_2 + {1\over\displaystyle a_3 +{1\over\ddots }}}}\equiv
[0; a_1,a_2,a_3,\dots]\quad , \quad a_i\in \N
$$
and a \emph{binary expansion}, i.e.
$$
x= \sum_{i\geq 1} b_i\, 2^{-i} \equiv 0. b_1\, b_2 \dots \quad , \quad b_i \in \{0,1\}
$$
The action of $T$ on binary expansions is as follows: for $\omega \in \{0,1\}^{\N}$,
\be\label{tenda}
T(0.\, 0\,\omega ) = 0.\, \omega \quad , \quad  T(0.\,1\, \omega ) = 0.\, {\hat \omega}
\ee
where ${\hat \omega}={\hat \omega}_1{\hat \omega}_2\dots$ and
$\hat{0} =1$, $\hat{1}=0$. 
The actions of $F$ and $G$ are given by
$
F([0;a_1,a_2,a_3,\dots] )=[0; a_1-1,a_2,a_3,\dots] 
$
if $a_1>1$, while $F([0; 1,a_2,a_3,...])=[0; a_2,a_3,...]$, and
$G([0;a_1,a_2,a_3,\dots] )=[0; a_2,a_3,\dots]$.
 As a matter of fact $G$ can be obtained by $F$ by inducing on the interval $I_1=[1/2, 1]$, i.e. 
\be
G(x) = F^{\lfloor 1/x \rfloor}(x)\quad , \quad x\ne 0
\ee

\noindent
Now, given 
$x=[0;a_1,a_2,a_3,\dots]$, one may ask what is the number obtained by
interpreting the partial quotients $a_i$ as the lengths of successive blocks in  the
dyadic expansion of a real number in $[0,1]$; this defines \emph{Minkowski's question mark} function
$?: \ [0,1] \rightarrow [0,1]$ 
\be ?(x)=\sum_{k\geq 1}(-1)^{k-1}\, 2^{-(a_1+\cdots
+a_k-1)} = 
0.\, {\underbrace
{00\dots 0}_{a_1-1}}\;{\underbrace {11\dots 1}_{a_2}}\;{\underbrace
{00\dots 0}_{a_3}}\; \cdots 
\ee
\begin{figure}[h]
  \centering
\subfloat[Tent map]{\label{fig:gull}\includegraphics[width=0.3\textwidth]{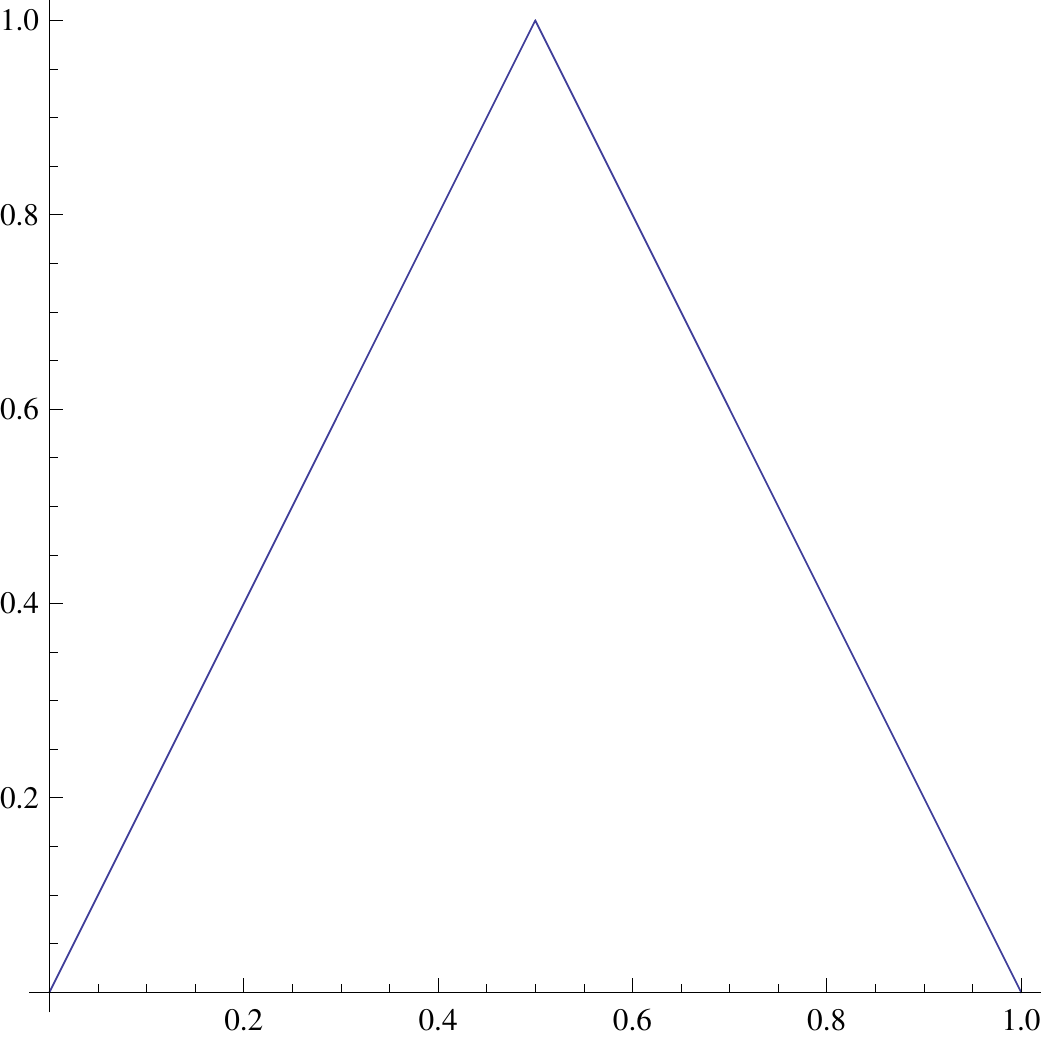}}       
\quad
  \subfloat[Minkowski map]{\label{fig:mouse}\includegraphics[width=0.3\textwidth]{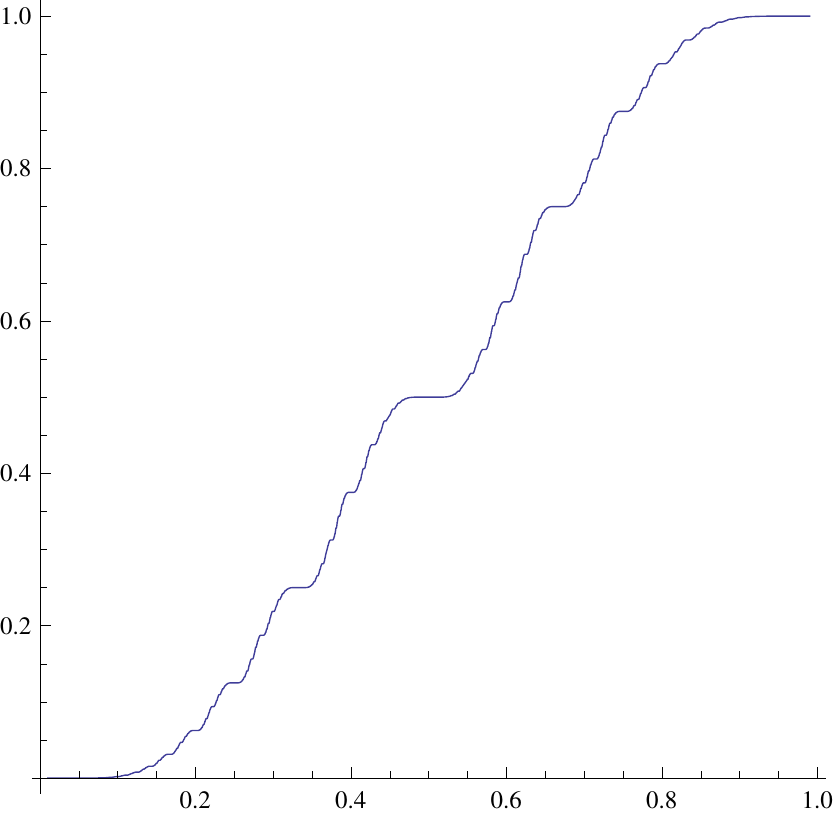}}
\quad
  \subfloat[Farey map]{\label{fig:tiger}\includegraphics[width=0.3\textwidth]{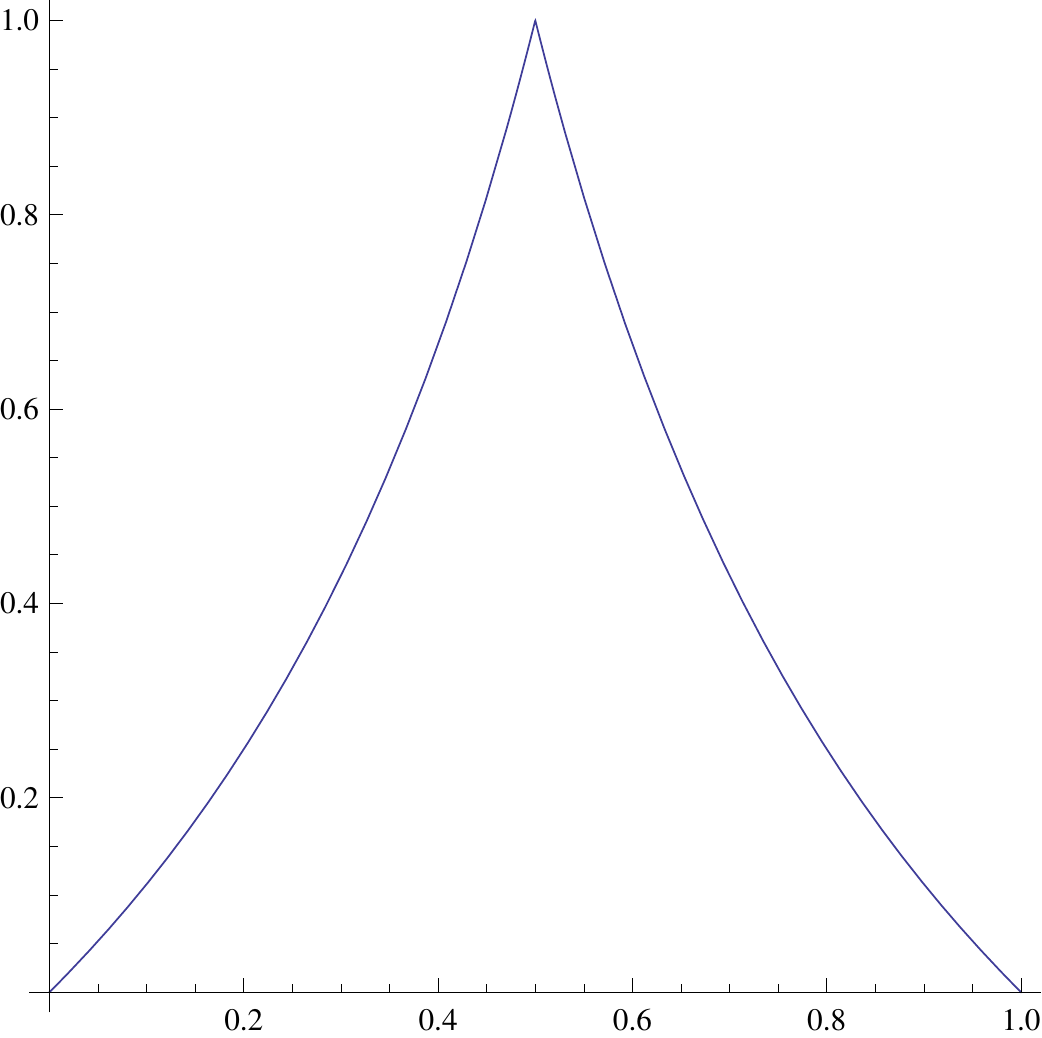}}
\end{figure}
The questionmark function $?(x)$ has the following properties (see \cite{Sa}):
\begin{itemize}
\item it is strictly increasing from $0$ to $1$  and H\"older continuous of exponent $\beta =\frac{\log
2}{2 \log \frac{\sqrt{5}+1}{2}}$;
\item $x$ is rational iff $?(x)$ is of the form $k/2^s$, with $k$ and
  $s$ integers;
\item $x$ is a quadratic irrational iff $?(x)$ is a (non-dyadic)
rational;
\item $?(x)$ is a singular function: its derivative vanishes Lebesgue-almost everywhere;
\item it satisfies the functional equation $?(x)+?(1-x)=1$.
\end{itemize}
We shall see later that the question mark function $?$ plays a key role in the main result of this paper (Theorem \ref{mth})
because it conjugates the actions of $F$ and $T$. For a more general analysis of the role played by $?$ in a dynamical setting see \cite{BI}.

\section{The bifurcation sets} \label{exset}

\subsection{The bifurcation set for continued fraction transformations} \label{E}

Let $r \in (0, 1) \cap \mathbb{Q}$ be a rational number, and let $r = [0; a_1, \dots, a_n]$ be its continued fraction expansion, with
$a_n \geq 2$. The \emph{quadratic interval} associated to $r$ will be the 
open
interval $I_r$ with endpoints
$$[0; \overline{a_1, \dots, a_{n-1}, a_n}] \quad \textup{and} \quad [0; \overline{a_1, \dots, a_{n-1}, a_n - 1, 1}]$$
where $[0; \overline{a_1, \dots, a_n}]$ denotes the real number 
 with periodic continued fraction expansion of period 
$(a_1,\dots,a_n)$ .
The number $r$ is called the \emph{pseudocenter} of $I_r$. We also define the degenerate quadratic interval $I_1:=(g,1]$, where $g:=[0;\bar 1]$ is the golden number.

\begin{definition} The \emph{bifurcation set} for continued fractions is  
\begin{equation} \label{Edef}
{\cal E} := [0,1]\setminus \bigcup_{r \in (0, 1]\cap \mathbb{Q}} I_r
\end{equation}
\end{definition}
 Quadratic intervals which are not properly contained in any larger
 quadratic interval are called \emph{maximal} quadratic intervals. By
 (\cite{CT}, Section 2.2), if two quadratic intervals are not disjoint,
 they are both contained in a maximal quadratic interval; thus, the
 connected components of the complement of $\EE$ are precisely the
 maximal intervals.
For all parameters belonging to a maximal quadratic interval, the $\alpha$-continued fraction transformation $T_\alpha$ 
satisfies a \emph{matching condition} between the orbits of the endpoints, with fixed combinatorics:

\begin{theorem}[\cite{CT}, Theorem 3.1] 
Let $I_r$ be a maximal quadratic interval, and $r = [0; a_1, \dots, a_n]$ with $n$ even. Let 
$$N = \sum_{i \textup{ even}} a_i \qquad M = \sum_{i \textup{ odd}} a_i$$
Then for all $\alpha \in I_r$, 
\begin{equation} \label{matching}
T_\alpha^{N+1}(\alpha) = T_\alpha^{M+1}(\alpha-1) \qquad \forall \alpha \in I_r
\end{equation}
\end{theorem}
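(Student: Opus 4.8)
The plan is to exploit the fact that $T_\alpha$ is piecewise Möbius and reduce the matching identity \eqref{matching} to a single identity between integer matrices. On each branch the map is fractional linear with integer coefficients: for $x>0$ one has $T_\alpha x = \tfrac{1}{x} - c = \frac{-cx+1}{x}$, encoded by $P_c^{+}=\begin{pmatrix} -c & 1 \\ 1 & 0\end{pmatrix}$, while for $x<0$ one has $T_\alpha x = -\tfrac{1}{x} - c = \frac{-cx-1}{x}$, encoded by $P_c^{-}=\begin{pmatrix} -c & -1 \\ 1 & 0\end{pmatrix}$, where $c=c_{\alpha,x}$. First I would argue that, as $\alpha$ ranges over the open interval $I_r$, both endpoints $\alpha$ and $\alpha-1$ follow a fixed symbolic itinerary — i.e. the sequence of branches (digits $c$ and signs) visited does not depend on $\alpha$ — since by (\cite{CT}, Section 2.2) $I_r$ lies in a connected component of the complement of $\EE$ across which no combinatorial change occurs. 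Granting this, the maps $\alpha \mapsto T_\alpha^{N+1}(\alpha)$ and $\alpha \mapsto T_\alpha^{M+1}(\alpha-1)$ are each given by one fixed matrix, say $P$ and $Q$, and writing $\alpha-1=S\alpha$ with $S=\begin{pmatrix}1 & -1 \\ 0 & 1\end{pmatrix}$ the claim \eqref{matching} becomes $P\cdot\alpha=(QS)\cdot\alpha$ for all $\alpha\in I_r$; since two Möbius transformations agreeing on an interval coincide, this is equivalent to the matrix identity $P=\pm\,QS$.

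The technical heart is to determine these itineraries explicitly from the continued fraction expansion $r=[0;a_1,\dots,a_n]$ of the pseudocenter. I would track both orbits step by step, showing that the orbit of the right endpoint $\alpha$ collides with that of the left endpoint $\alpha-1$ after exactly $N+1$ and $M+1$ steps respectively, the two step-counts reflecting the splitting of the partial quotients according to the parity of their index. This bookkeeping is where the shift by $1-\alpha$ inside the digit $c_{\alpha,x}=\lfloor \tfrac{1}{|x|}+1-\alpha\rfloor$ must be controlled: at each step one must decide whether the floor rounds the relevant continued fraction tail up or down, which amounts to comparing that tail with $\alpha$ and with $1-\alpha$, i.e. precisely to the inequalities characterizing membership of $\alpha$ in $I_r$. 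The hypothesis that $n$ is even enters here, guaranteeing that the two periodic expansions $[0;\overline{a_1,\dots,a_n}]$ and $[0;\overline{a_1,\dots,a_n-1,1}]$ are the correctly ordered endpoints of $I_r$ and that the two sign sequences terminate compatibly, so that the orbits can merge.

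With the itineraries in hand, I would prove the matrix identity $P=\pm\,QS$ by induction on $n$, organized along the tree of maximal quadratic intervals of (\cite{CT}, Section 2.2). The base case $n=2$ (so $N=a_2$, $M=a_1$) is a direct product of $a_2+1$ and $a_1+1$ elementary matrices $P_c^{\pm}$, which one multiplies out and compares using the standard recursion satisfied by the convergents $p_k,q_k$ of $r$. For the inductive step I would peel off the leading block governed by $a_1$ (equivalently, pass to a related interval whose pseudocenter is obtained by deleting the first partial quotients), feed in the identity for the shorter word, and let the shift matrix $S$ intertwine the two computations in the required way.

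The step I expect to be the genuine obstacle is the second paragraph: pinning down the symbolic itinerary of both endpoints and, above all, proving it is constant over the entire open interval $I_r$, with no spurious extra step and exactly one sign change at the fold through $0$. The boundary cases — where a digit is on the verge of rounding up, or where the orbit lands near $0$ or near a branch discontinuity — are delicate and must be settled uniformly in $\alpha$ using the defining inequalities of the quadratic interval; once the combinatorics is fixed, the reduction of the first paragraph and the algebra of the third are essentially formal.
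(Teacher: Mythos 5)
This statement is not proved in the paper at all: it is imported verbatim from \cite{CT} (Theorem 3.1), so there is no internal proof to compare against, and your proposal must stand on its own. It does not, because of a circularity at exactly the point you identify as the crux. You justify the claim that $\alpha$ and $\alpha-1$ follow a fixed symbolic itinerary for all $\alpha$ in the open interval $I_r$ by appealing to (\cite{CT}, Section 2.2) and to the fact that ``$I_r$ lies in a connected component of the complement of $\mathcal{E}$ across which no combinatorial change occurs.'' But in this paper (and in \cite{CT}) the set $\mathcal{E}$ and the quadratic intervals $I_r$ are defined purely arithmetically, via the continued fraction expansions $[0;\overline{a_1,\dots,a_n}]$ and $[0;\overline{a_1,\dots,a_{n-1},a_n-1,1}]$ of the endpoints; Section 2.2 of \cite{CT} only establishes the combinatorial structure of these intervals as subsets of $[0,1]$ (non-disjoint quadratic intervals are nested in maximal ones). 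The assertion that the \emph{dynamics} of $T_\alpha$ has constant combinatorics across $I_r$ --- in particular that the finite itineraries of the two endpoints are independent of $\alpha\in I_r$ --- is not contained in that definition: it is precisely the dynamical content of the theorem being proved, and is the reason $\mathcal{E}$ deserves the name ``bifurcation set'' in the first place. So the step you grant yourself in the first paragraph is the theorem itself.

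Beyond that, every piece of genuine work is deferred rather than done: the explicit determination of the two itineraries from $[0;a_1,\dots,a_n]$, the verification that the floor in $c_{\alpha,x}=\lfloor 1/|x|+1-\alpha\rfloor$ rounds the same way uniformly over $I_r$ (including the boundary cases near $0$ and near branch discontinuities), the role of the parity of $n$, and the matrix identity $P=\pm QS$ with its induction. The reduction to Möbius matrices and the observation that two fractional linear maps agreeing on an interval agree up to sign are correct and are indeed the right formalism --- this is essentially how \cite{CT} organizes the computation --- but a plan whose only substantive input is borrowed circularly and which reduces the statement to its own hardest step is an outline, not a proof.
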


As a consequence, by \cite{NN}, the metric entropy $h(T_\alpha)$ is locally monotone on the complement of $\EE$.
The set $\mathcal{E}$ can be given the following new characterization:



\begin{lemma} \label{characterizeE}
Let $x\in [0,1]$. The following are equivalent:
\begin{enumerate}
\item[(a)] $x\in {\cal E}$
\item[(b)] $G^k(x)\geq x$, $\forall k\in \N$
\item[(c)] $F^k(x)\geq x$, $\forall k\in \N$
\item[(d)] $F^k\circ \Psi_1 (x) \leq \Psi_1 (x)$\, $\forall k \in \N$,
where $\Psi_1(x) := 1/(1+x)$ is an inverse branch of $F$.
\end{enumerate}
\end{lemma}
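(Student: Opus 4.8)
The plan is to prove the chain of equivalences $(a)\Leftrightarrow(b)$, then $(b)\Leftrightarrow(c)$, and finally $(c)\Leftrightarrow(d)$, treating each link with the symbolic dynamics established in Section~\ref{prelim}. The equivalence $(b)\Leftrightarrow(c)$ should be the cheapest: since $G$ is obtained from $F$ by inducing on $I_1=[1/2,1]$, namely $G(x)=F^{\lfloor 1/x\rfloor}(x)$, every $G$-orbit is a subsequence of the $F$-orbit, so $(c)\Rightarrow(b)$ is immediate. For the converse I would argue that the $F$-iterates lying strictly between consecutive $G$-iterates can only \emph{increase} the value: on $[0,1/2)$ the branch $x\mapsto x/(1-x)$ is expanding and pushes points to the right until they land in $I_1$, so if all the ``return'' values $G^k(x)$ dominate $x$, then so do all intermediate $F$-iterates. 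Concretely, writing the $F$-orbit between two successive visits to $I_1$ as $F^j(y)=[0;a_1-j,a_2,\dots]$ for $y=[0;a_1,a_2,\dots]$ with $a_1=\lfloor 1/y\rfloor$, these values are monotone in $j$, so it suffices to control them at the endpoints of each inducing block.

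\medskip

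For $(a)\Leftrightarrow(b)$ I would translate the definition of $\EE$ as a complement of quadratic intervals into the $G$-orbit condition. The key is that $G$ acts as the shift on continued fraction expansions, so the orbit closure of $x$ under $G$ records all the ``tails'' $[0;a_{k+1},a_{k+2},\dots]$ of $x$. The condition $G^k(x)\geq x$ for all $k$ should be recognized as exactly the statement that $x$ is the \emph{smallest} among its own shifts, i.e. a combinatorial minimality condition on the continued fraction expansion. The content of \cite{CT} is that a parameter fails to lie in $\EE$ precisely when its expansion can be ``closed up'' into a periodic word realizing a strictly smaller tail, which is the violation $G^k(x)<x$; here I would lean on the description of maximal quadratic intervals and their pseudocenters, using that the endpoints $[0;\overline{a_1,\dots,a_n}]$ of $I_r$ are the extreme shifts of the periodic word. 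The main obstacle of the whole lemma lives here: I must match the geometry of the quadratic intervals $I_r$ (defined by two explicit periodic endpoints) with the dynamical inequality, and in particular handle the boundary cases where $x$ itself is an endpoint of some $I_r$, so that the inequalities are non-strict exactly on $\EE$. Getting the degenerate interval $I_1=(g,1]$ and the golden number $g=[0;\bar1]$ to fall out correctly (it is the supremum of $\EE$) will require care at the top of the interval.

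\medskip

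For $(c)\Leftrightarrow(d)$ I would exploit that $\Psi_1(x)=1/(1+x)$ is the inverse branch of $F$ mapping $[0,1]$ into $I_1=[1/2,1]$, so $F\circ\Psi_1=\mathrm{id}$. The idea is that orbits starting in $[0,1/2)$ and orbits starting in $I_1$ are conjugate pictures related by this branch: condition $(d)$ is simply condition $(c)$ read off at the point $\Psi_1(x)$ with the inequality reversed, which is forced because $\Psi_1$ is orientation-reversing. I would verify that $F^k(\Psi_1(x))\leq \Psi_1(x)$ for all $k$ is equivalent, after applying $F$ once (which inverts $\Psi_1$ and flips the direction of the inequality), to $F^k(x)\geq x$, being careful that the $k=0$ term and the first application behave consistently. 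This step is essentially a change of variables and should present no serious difficulty once $(c)$ is in hand.

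\medskip

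In summary, I expect to spend most of the effort on $(a)\Leftrightarrow(b)$, where the combinatorial characterization of $\EE$ via continued fractions from \cite{CT} must be shown equivalent to the orbit inequality, while $(b)\Leftrightarrow(c)$ and $(c)\Leftrightarrow(d)$ reduce to the inducing relation $G=F^{\lfloor 1/x\rfloor}$ and to the single inverse branch $\Psi_1$ respectively. Throughout I would use the dictionary that $F$ and $G$ act as subtraction and shift on partial quotients, which makes every inequality $F^k(x)\geq x$ or $G^k(x)\geq x$ a transparent statement about comparing $x$ with shifts of its own continued fraction expansion.
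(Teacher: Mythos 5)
Your handling of $(b)\Leftrightarrow(c)$ is correct and is essentially the paper's own argument: an intermediate iterate $F^k(x)=[0;h,a_{n+1},a_{n+2},\dots]$ with $1\le h\le a_n$ dominates the return value $G^{n-1}(x)$, so it suffices to control the orbit at the inducing times. The genuine gap is in $(a)\Leftrightarrow(b)$, which you rightly identify as the heart of the lemma but never actually prove. Your appeal to ``the content of \cite{CT}'' --- that a parameter fails to lie in $\EE$ precisely when some tail of its expansion is strictly smaller --- is circular: that statement \emph{is} the equivalence being established here (the paper presents it as a new characterization of $\EE$); what \cite{CT} actually supplies is only the combinatorial fact (Proposition 4.5) that cyclic shifts of a periodic word $\overline{a_1,\dots,a_n}$ dominate the word itself, plus the structure of maximal intervals. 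Two essential steps are then missing from your plan. For $(a)\Rightarrow(b)/(c)$: Proposition 4.5 covers only the countably many endpoints of maximal intervals, while $\EE$ is uncountable; the paper passes to arbitrary points of $\EE$ by using that $\EE$ has empty interior, so these endpoints are dense in $\EE$, and then extending the inequality by continuity --- and it routes this through condition $(c)$ precisely because $F$ is continuous while $G$ is not. Your proposal instead treats the endpoints as the delicate ``boundary cases'', which inverts the actual logic: they are the base case, and the limiting argument for the remaining points of $\EE$ is what must be added. For $(b)\Rightarrow(a)$: one must exhibit, for \emph{every} $x$ in a maximal interval $(\alpha^-,\alpha^+)$ with pseudocenter $a=[0;A^-]=[0;A^+]$, a concrete iterate falling below $x$; the paper does this by squeezing $x$ between $[0;(A^+)^{n+1}]$ and $[0;(A^+)^{n+2}]$ when $x\in[a,\alpha^+)$, and by exploiting the odd length of $A^-$ to get $G^{l}(x)<\alpha^-<x$ when $x\in(\alpha^-,a]$. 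Nothing in your outline produces these iterates.

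There is also a smaller but real flaw in your mechanism for $(c)\Leftrightarrow(d)$: ``applying $F$ once flips the direction of the inequality'' is only valid when both points lie on the decreasing branch $[1/2,1]$, and $F^k(\Psi_1(x))$ lies there only at the return times $k\in J=\{\,a_1+\dots+a_n : n\ge 1\,\}$. The correct argument (as in the paper) splits into cases: for $k\in J$ one has $F^k(\Psi_1(x))=\Psi_1(F^k(x))$ and the order-reversal of $\Psi_1$ transfers the inequality; for $k\notin J$ the bound $F^k(\Psi_1(x))\le \tfrac12\le\Psi_1(x)$ holds automatically and carries no information. Since $F$ is not monotone on $[0,1]$ and $(d)$ quantifies over all $k$ including non-return times, your ``change of variables'' does not go through as stated --- though unlike the gap in $(a)\Leftrightarrow(b)$, this one is fixable in a few lines.
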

\noindent
{\sl Proof.} 
Let $x = [0; a_1, a_2, \dots]$ be the continued fraction expansion of $x$.

\noindent $(b) \Rightarrow (c)$. Fix $k \geq 1$, so 
$F^k(x) = [0; h, a_{n+1}, a_{n+2}, \dots]$
for some $n$ and $1 \leq h \leq a_n$. Hence
$$F^k(x) \geq [0; a_n, a_{n+1}, \dots] = G^{n-1}(x) \geq x $$

\noindent $(c) \Rightarrow (d)$. Let $J\subset \N$ be the subsequence given by the partial quotients sums, 
i.e.  $J=\{\sum_{i=1}^n a_i\}_{n\geq 1}$, and note that $y := \Psi_1(x) = [0; 1, a_{1},  a_{2}, \dots ]$.
If $k\notin J$ then $F^k(y) \leq \frac{1}{2} \leq y$. Otherwise, $F^k(y)=[0; 1, a_{n+1}, a_{n+2}, \dots ]
=\Psi_1(F^k(x))$ for some $n\geq 1$ so that
$$ x=[0; a_1, a_2, a_3,...] \leq [0;  a_{n+1},
 a_{n+2}, \dots ]=F^k(x) \Rightarrow F^{k}(y) \leq y $$

\noindent $(d) \Rightarrow (b)$. 
Fix $k \geq 1$, and let $s := \sum_{j = 1}^k a_j$, so that $F^s(x) = G^k(x)$. Now, 
$$
F^s(\Psi_1(x)) = [0; 1, a_{k+1}, a_{k+2}, \dots] \leq [0; 1, a_1, a_2, \dots] = \Psi_1(x)
$$
therefore $[0; a_{k+1}, \dots ] = G^k(x) \geq x = [0; a_1, \dots]$.
 
\noindent $(a) \Rightarrow (c)$. 
Let $x \in \mathcal{E}$ be the endpoint of a maximal interval, hence 
$x = [0; \overline{a_1, \dots, a_n}]$.
Once you fix $k \geq 0$, then $G^k(x) = [0; \overline{a_{l+1}, \dots, a_n, a_1, \dots, a_l}]$ 
with $l \equiv k \mod n$. Now, by \cite{CT}, Proposition 4.5  
$$G^k(x) = [0; \overline{a_{l+1}, \dots, a_n, a_1, \dots, a_l}] \geq [0; \overline{a_1, \dots, a_l, a_{l+1}, \dots, a_n}] = x$$
i.e. $(b)$, and hence $(c)$.
Since 
%
$\mathcal{E}$ has empty interior
 (\cite{CT}, Theorem 1.2), every point in $\mathcal{E}$ is a limit point of a 
sequence of endpoints of maximal intervals, so inequality $(c)$ extends to all of $\mathcal{E}$ 
by continuity of $F$.

\noindent $(b)\Rightarrow (a)$. If $x\notin \mathcal{E}$, then $x$ belongs to a maximal interval $I_a = (\alpha^-, \alpha^+)$.
Let $a = [0; A^-] = [0; A^+]$ be the two continued fraction expansions of the rational number $a$, 
in such a way that $\alpha^- = [0; \overline{A^-}]$ and $\alpha^+ = [0; \overline{A^+}]$. Here $A^-$ is a finite string 
of integers of odd length 
$l$, and $A^+$ is a finite string of integers of even length 
$m$.
The sequence 
$$a = [0; A^+] < [0; A^+A^+]< \dots < [0; (A^+)^n] < [0; (A^+)^{n+1}] < \dots $$
tends to $[0; \overline{A^+}] = \alpha^+$, hence 
for any $x \in [a, \alpha^+)$ there exists $n \geq 0$ such that 
$$[0; (A^+)^{n+1}] \leq x < [0; (A^+)^{n+2}]$$ 
therefore $y := G^{m}(x) < [0; (A^+)^{n+1}] \leq x$. 
If instead $x \in (\alpha^-, a]$, then using the fact that $l$ is odd
$$ 0 = G^{l}(a) \leq G^{l}(x) < G^{l}(\alpha^-) = \alpha^-$$
so $ G^{l}(x) < \alpha^- < x $. 
$\qed$


\subsection{Universal encoding for unimodal maps}  \label{Lambda}

We now introduce a set $\Lambda$ which encodes the topological dynamics of unimodal maps in a universal way.
Several similar approaches are possible (\cite{MT}, \cite{dMvS} among others);  we follow \cite{IP}, \cite{Is}.
 We recall that a smooth map $f:[0,1] \to [0,1]$ is called {\sl unimodal} if it has exactly one critical point $0<c_0<1$, and we are going
to assume $f(0)=f(1)=0$. 
If $x \in [0,1]$ never maps to $c_0$ under $f$, let us define the {\sl itinerary} of $x\in[0,1]$ to be the sequence 
$$i(x)=s_1s_2\dots \ \ \mbox{ with }  \ \ s_i=\left\{
\begin{array}{ll} 
0 & \textup{if }f^{i-1}(x)<c_0\\
1 & \textup{if }f^{i-1}(x)> c_0
\end{array}
\right.
$$
If $x$ eventually maps to $c_0$, let us define $i(x^\pm) := \lim_{y \to x^\pm} i(y)$ where the limit is taken over non-precritical points. 
The {\sl kneading  sequence } of a unimodal map is the binary sequence $K := i(c_1^-)$, where $c_1 = f(c_0)$ is the critical value.

Kneading sequences encode the combinatorics of the orbits and determine the topological entropy.
A nice way to decide whether a given binary sequence $s \in \{0,1\}^\N$ is the kneading sequence of some unimodal map is the following: 
we first associate to $s= s_1s_2\dots  \in \{0,1\}^\N$ the number $\tau (s)\in [0,1]$ defined as
\be\label{eq:enc}
\tau =0.t_1t_2\dots =\sum_{k\geq 1} t_k 2^{-k}, \qquad t_k = \sum_{i=1}^k s_i \, {\rm (mod \  2)}
\ee
One readily checks that the following diagram is commutative:

\be
\label{diagram}
\begin{CD}
[0,1] @>i >> \{0,1\}^\N @>\tau >> [0,1] \\
@Vf VV  @VV \sigma V  @VV T V\\
[0,1] @>i >> \{0,1\}^\N @>\tau >> [0,1] \\
\end{CD}
\ee

\vskip 0.5 cm 
\noindent
where $T$ is the tent map (\ref{tenda}) and $\sigma : \{0,1\}^\N \to \{0,1\}^\N$ is the shift map. Since the critical value is the 
highest value in the image and the map $\tau \circ i$ is increasing (\cite{Is}, Lemma 1.1), one has the characterization 
\begin{proposition}[\cite{IP}, \cite{Is}]
A binary sequence $s\in \{0,1\}^\N$ is the kneading sequence of a continuous unimodal interval map if and only if 
$\tau := \tau (s)$ satisfies 
$$T^k(\tau) \leq \tau \textup{ for all }k\geq 0$$
\end{proposition}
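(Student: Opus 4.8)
The plan is to read the characterization directly off the commutative diagram \eqref{diagram}, which says that the map $\Phi := \tau \circ i$ intertwines the dynamics of $f$ with that of the tent map, $T \circ \Phi = \Phi \circ f$, so that $\Phi(f^k(x)) = T^k(\Phi(x))$ for every $k \geq 0$. The two facts I would combine with this are that $\Phi$ is increasing (\cite{Is}, Lemma 1.1) and that the critical value $c_1 = f(c_0)$ is the maximum of $f$ on $[0,1]$, whence $f^k(c_1) \leq c_1$ for all $k \geq 1$ because the range of $f$ is contained in $[0, c_1]$.

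For the necessity (if $s = K$ is a kneading sequence then $\tau$ is admissible) I would proceed as follows. By definition $\tau = \tau(K) = \Phi(c_1^-)$, the one-sided limit. Using continuity of $T$, the conjugacy, and continuity of $f$, for $k \geq 1$ one writes
\[
T^k(\tau) = T^k\big(\Phi(c_1^-)\big) = \lim_{y \to c_1^-} T^k\big(\Phi(y)\big) = \lim_{y \to c_1^-} \Phi\big(f^k(y)\big).
\]
Since $f^k(y)$ lies in the image of $f$ it is at most $c_1$, so monotonicity of $\Phi$ gives $\Phi(f^k(y)) \leq \Phi(c_1^-) = \tau$, and passing to the limit yields $T^k(\tau) \leq \tau$; the case $k = 0$ is trivial. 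The only delicate point here is the bookkeeping of one-sided limits at precritical points, which is precisely what the definition $i(x^\pm)$ is designed to handle.

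For the sufficiency (if $T^k(\tau) \leq \tau$ for all $k$, then $s$ is realized) — the genuinely harder direction — I would realize $\tau$ inside a single model family, the tent family $T_\lambda(x) = \min\{\lambda x, \lambda(1-x)\}$ with $\lambda \in [1,2]$. Each $T_\lambda$ is unimodal with critical point $1/2$ and critical value $\lambda/2$, and its kneading data is encoded by $v(\lambda) := \Phi_\lambda(\tfrac12^-)$. I would first show that $\lambda \mapsto v(\lambda)$ is monotone and continuous (the classical monotonicity of the tent family, equivalent to monotonicity of topological entropy), and then that its image is exactly the admissible set $\{v : T^k(v) \leq v \ \forall k\}$. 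Given $\tau$ admissible, this produces a parameter $\lambda$ whose kneading sequence $K_\lambda$ satisfies $\tau(K_\lambda) = \tau$; since $\tau$ is injective on shift-maximal sequences, $K_\lambda = s$, so $s$ is a kneading sequence.

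The main obstacle is the surjectivity statement in the sufficiency half: one must show that every value $\tau$ obeying the self-referential inequalities $T^k(\tau) \leq \tau$ is actually attained as the critical-orbit code of some unimodal map. This is the realization theorem underlying \cite{IP}, \cite{Is}, and the cleanest route is to establish the intermediate-value and monotonicity properties of the tent family and to check that the admissibility condition $T^k(\tau) \leq \tau$ both is preserved in the limit of the parametrization and characterizes its image; alternatively one may quote the Milnor--Thurston realization theorem \cite{MT} directly.
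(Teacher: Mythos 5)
Your ``necessity'' half is fine, and it is in fact all the paper itself offers: the paper deduces the ``only if'' direction in one sentence from the commutativity of diagram \eqref{diagram}, the monotonicity of $\tau\circ i$ (\cite{Is}, Lemma 1.1), and the maximality of the critical value, and for the rest of the proposition it simply cites \cite{IP}, \cite{Is}. So the part of your argument that you carry out in detail coincides with the paper's sketch; the part the paper leaves to the references is where your proposal breaks down.

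The gap is in the sufficiency direction: the tent family does \emph{not} realize every admissible value, so the surjectivity claim on which your plan rests is false. Concretely, $\tau=2/3$ is admissible ($T(2/3)=2/3$), and under \eqref{eq:enc} it corresponds to $s=\overline{1}$, the kneading sequence of any unimodal map whose critical orbit is attracted to a fixed point of negative multiplier. But no tent map $T_\lambda$ with $\lambda>1$ has kneading sequence $\overline{1}$: that itinerary forces the critical value $\lambda/2$ to coincide with the fixed point $\lambda/(1+\lambda)$ of the right branch (two distinct points whose orbits remain on the same branch are separated at rate $\lambda^k$), which gives $\lambda=1$; and $T_1$ has kneading sequence $\overline{0}$, i.e.\ $\tau=0$. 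The same expansion argument excludes $4/5\leftrightarrow\overline{10}$ and, more generally, every isolated point of $\Lambda$ past the first one in each period-doubling window, while infinitely renormalizable values such as the Thue--Morse point $\tau_\infty(\epsilon)=0.8249\ldots$ are excluded because a tent map of slope $\lambda>1$ is only finitely renormalizable. Your intermediate claims are also mutually inconsistent: a map $\lambda\mapsto v(\lambda)$ that is monotone and continuous has an interval as image, whereas the admissible set $\Lambda$ is totally disconnected; in reality the kneading value of the tent family is monotone but jumps across the windows corresponding to attracting cycles. To realize all admissible sequences one needs a full family admitting attracting periodic orbits and infinitely renormalizable members (e.g.\ the real quadratic family, as in Section \ref{Lambda} of the paper), or a direct combinatorial construction; your fallback of quoting the realization theorem of \cite{MT} (or \cite{IP}, \cite{Is}) is legitimate, but then the hard half of the proposition is a citation rather than a proof, which is exactly the paper's own treatment.
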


\noindent One is thus led to study the set
\be
\Lambda := \{ x\in [0,1]\, : \, T^k(x)\leq x, \; \forall k\in \N\}
\ee
By considering $T(x) \leq x$ one immediately realizes that  $\Lambda  \setminus \{0\}\subseteq [2/3,1]$. Note moreover that, if the map is $C^1$, the extremal situations in which 
$T^k(\tau)=\tau$
for some $k>0$ correspond to periodic attractors (and thus periodic kneading sequence) 
of
the corresponding smooth unimodal maps (see [dMvS], Chapter IV).

Note that all kneading sequences which arise from unimodal maps can be actually realized by real quadratic polynomials $f_c(z) = z^2 + c$, 
$c \in [-2, \frac{1}{4}]$. By density of hyperbolicity
 (\cite{GS}, \cite{Ly}), aperiodic kneading sequences are realized by exactly one map $f_c$. 
On the other hand, the set of parameters which realize a given admissible periodic kneading sequence is an interval in parameter space, 
and among those maps there is exactly one $f_c$ which has a parabolic orbit.
In conclusion, $\Lambda$ parametrizes the set of topologically unstable parameters, hence it will be called the \emph{binary bifurcation set}.

\section{From continued fractions to kneading sequences} \label{main}
\noindent
We are now ready to prove Theorem \ref{mth}, namely that the map
$\varphi: [0, 1] \rightarrow [\frac{1}{2},1]$ given by 
$ x = [0;a_1, a_2, a_3, ...]
\mapsto \varphi(x) = 0.\underbrace{11\ldots1}_{a_1}\underbrace{00\ldots0}_{a_2}\underbrace{11\ldots1}_{a_3}\ldots $
is an orientation-reversing homeomorphism which takes $\mathcal{E}$ onto $\Lambda\setminus\{0\}$.

\textit{Proof of theorem \ref{mth}.}
The key step is that Minkowski's question mark function conjugates the Farey and tent maps, i.e.
\begin{equation} \label{conj}
?(F(x)) = T(?(x)) \qquad \forall x \in [0,1]
 \end{equation}
Note that $\varphi = ? \circ \Psi_1$, hence $\varphi$ is a homeomorphism of $[0,1]$ onto the image of $\Psi_1$, i.e. $[\frac{1}{2}, 1]$. Moreover, 
by Lemma \ref{characterizeE}-(d),
$$\Psi_1(\mathcal{E}) = \{x \in (0, 1] : F^k(x) \leq x \}$$
and by \eqref{conj}
$$?(\Psi_1(\mathcal{E})) = \{ x \in (0,1] : T^k(x) \leq x \} = \Lambda\setminus\{0\}. \qquad \qed$$

\noindent In the following subsections we will investigate a few consequences of such a correspondence.

\subsection{Construction of $\Lambda$ and bisection algorithm} \label{bisec}

A direct consequence of the theorem is the following algorithm to construct $\Lambda$, which mimics the one used to construct $\cal E$: 
let $\omega=\omega_1\dots \omega_{n-1}1$ be a binary word of length $n$ (ending with $1$) and $d=0.\omega$
the corresponding dyadic rational. Setting $\omega^*=\omega_1\dots \omega_{n-1}0$, we see that $d$ has exactly two representations:
$$
d=0.\,\omega\, {\overline 0} \quad \hbox{and} \quad d= 0.\,\omega^* {\overline 1} 
$$
where, if $\eta$ is any finite binary word, $\overline{\eta}\in \{0,1\}^\N$ denotes the 
 infinite sequence obtained by repeating $\eta$ indefinitely.
 Now define the \textit{dyadic interval} associated to $d$ as the open interval $J_d=(r^-,r^+)$ whose endpoints are the (non-dyadic) rationals
\footnote{Since the correspondence is orientation reversing, the string defining $r^-$ corresponds to the string $A^+$ which defines the upper endpoint of the interval in \cite{CT}, Section 2.2.}

$$r^- =0.{\overline {\omega^*} } \quad \hbox{and} \quad r^+=0.{\overline {\omega{\hat \omega}} },\quad $$

\vskip 0.2cm
\noindent
\noindent
{\sc Example.} 
Take $d=13/16 = \varphi(2/5)$. Since the binary expansions of $d$ are $0.1101{\overline 0}$ and 
$0.1100{\overline 1}$, we get $r^-= 0.{\overline {1100}}=4/5 $ and $r^+=0.{\overline {11010010}}=14/17$. 
(See also the example in Section \ref{pedou}.)

\begin{proposition} \label{Lambda_complement}
 We have
\be
\Lambda = [0,1) \setminus \bigcup_{d\in (0,1]\cap \Q_2} J_d
\ee
with $\Q_2:=\{ k/2^s\, : \, s\in \N, \, k\in \Z\}$. 
\end{proposition}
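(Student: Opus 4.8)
The plan is to transport the known description of $\EE$ as the complement of quadratic intervals (equation \eqref{Edef}) through the homeomorphism $\f = {?}\circ\Psi_1$ of Theorem \ref{mth}, so that the maximal quadratic intervals $I_r$ map precisely to the dyadic intervals $J_d$. Since $\f$ takes $\EE$ onto $\Lambda\setminus\{0\}$ and is an orientation-reversing homeomorphism, it must take the open complement $[0,1]\setminus\EE = \bigcup_r I_r$ onto $[2/3,1)\setminus\Lambda$ (bearing in mind $\Lambda\setminus\{0\}\subseteq[2/3,1]$). So it suffices to check that $\f$ maps the connected components of one complement to those of the other, i.e. $\f(I_r) = J_d$ for the appropriate correspondence $r \leftrightarrow d$.

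First I would pin down the correspondence between pseudocenters $r=[0;a_1,\dots,a_n]$ and dyadic rationals $d = \f(r)$. Writing $d = 0.\omega$ via the definition of $\f$, I expect $\omega$ to be the binary block $\underbrace{1\cdots1}_{a_1}\underbrace{0\cdots0}_{a_2}\cdots$ of length $a_1+\cdots+a_n$, suitably normalized to end in $1$ (the two continued fraction expansions $[0;a_1,\dots,a_n]=[0;a_1,\dots,a_n-1,1]$ of $r$ correspond to the two binary representations $0.\omega\bar0$ and $0.\omega^*\bar1$ of $d$). Then I would compute the two endpoints of $I_r$ under $\f$. The endpoints of $I_r$ are $[0;\overline{a_1,\dots,a_n}]$ and $[0;\overline{a_1,\dots,a_n-1,1}]$, and since $\f$ sends a periodic continued fraction to the corresponding periodic binary string, these map to $0.\overline{\omega^*}$ and $0.\overline{\omega\hat\omega}$ respectively. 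This is exactly the pair $(r^-,r^+)$ defining $J_d$; the footnote's orientation remark explains why $r^-$ comes from the even-length string. Verifying these two image computations, using the explicit form of $\f$ on periodic expansions together with the block-repetition bookkeeping, is the technical core.

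The main obstacle I anticipate is the careful combinatorial matching of the binary strings: one must confirm that $\f([0;\overline{a_1,\dots,a_n-1,1}]) = 0.\overline{\omega\hat\omega}$, which hinges on how reducing the last partial quotient by one and appending a $1$ interacts with the block-to-bit translation, including the parity that turns a single period of the continued fraction into the doubled block $\omega\hat\omega$. I would handle this by expanding $\f$ on a concrete periodic word and tracking the bit blocks, checking the given example $d=13/16$, where $r=2/5=[0;\overline{2,2}]^{-}$ maps to endpoints $4/5 = 0.\overline{1100}$ and $14/17 = 0.\overline{11010010}$, to confirm the pattern before asserting it in general.

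Finally, to conclude, I would invoke that $\f$ is an orientation-reversing homeomorphism to ensure the correspondence $I_r \leftrightarrow J_d$ is a bijection between the two families of components (every maximal $I_r$ yields a $J_d$ and conversely every dyadic $d\in(0,1]\cap\Q_2$ arises), so that
$$\f\Bigl([0,1]\setminus\bigcup_r I_r\Bigr) = [2/3,1]\setminus\bigcup_{d}J_d,$$
and hence $\Lambda\setminus\{0\} = [2/3,1]\setminus\bigcup_d J_d$. Adjoining the point $0$ and noting $\Lambda\setminus\{0\}\subseteq[2/3,1]$ while the $J_d$ cover $[0,1)\setminus\Lambda$ up to the low end, the stated identity $\Lambda = [0,1)\setminus\bigcup_{d}J_d$ follows. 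I would take care that the endpoints behave correctly: the $I_r$ and $J_d$ are open, $\EE$ and $\Lambda$ are the closed complements, and $\f$ matches boundary points to boundary points, so no endpoint is lost or double-counted.
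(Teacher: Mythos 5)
Your route is the same as the paper's --- its entire proof is ``From Theorem \ref{mth} and Equation \eqref{Edef}'', i.e.\ transporting \eqref{Edef} through $\f$ --- and your computation of the images of the endpoints of a non-degenerate quadratic interval (periodic continued fractions going to periodic binary strings, with the parity bookkeeping that produces $\omega\hat\omega$) is correct and is indeed the suppressed technical core. But two of your bridging claims are false, and they concern exactly the part of the picture that your endpoint computation does not cover. First, $\f$ does not take $[0,1]\setminus\EE=\bigcup_r I_r$ onto $[2/3,1)\setminus\Lambda$: since $\f$ is a bijection of $[0,1]$ onto $[1/2,1]$ with $\f(\EE)=\Lambda\setminus\{0\}$, the image is $[1/2,1]\setminus\Lambda$, which contains the half-open interval $[1/2,2/3)=\f(I_1)$, the image of the \emph{degenerate} interval $I_1=(g,1]$ (note $\f(1)=1/2$, $\f(g)=2/3$). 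Second, the component correspondence $I_r\leftrightarrow J_d$ is not a bijection: as $\f([0,1])=[1/2,1]$, no $J_d$ with $d<1/2$ is the image of any quadratic interval (e.g.\ $J_{1/4}=(0,2/5)$), and for $d=1/2$ one has $\f(I_1)=[1/2,2/3)\neq J_{1/2}=(0,2/3)$. Your closing remark that all the $I_r$ and $J_d$ are open and that $\f$ matches components to components, endpoints to endpoints, fails precisely at $I_1$, which is half-open; consequently the identity implicit in your argument, $\f\bigl(\bigcup_r I_r\bigr)=\bigcup_d J_d$, is false (the right-hand side contains $(0,1/2)$, which is disjoint from the range of $\f$).

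The repair is short but it is a genuine missing step, not bookkeeping hygiene: the dyadics $d\le 1/2$ must be handled by hand. Check that $J_{1/2}=(0,2/3)$, so that $J_{1/2}\cap[1/2,1]=[1/2,2/3)=\f(I_1)$; and check that for $d<1/2$ the word $\omega$ begins with $0$, whence both endpoints $0.\overline{\omega^*}$, $0.\overline{\omega\hat\omega}$ are smaller than $1/2$ and $J_d\cap[1/2,1]=\emptyset$. Combined with your correct identity $J_{\f(r)}=\f(I_r)$ for $r\in(0,1)\cap\Q$, and the fact that $\f$ carries $(0,1)\cap\Q$ bijectively onto $(1/2,1)\cap\Q_2$ (because $?$ maps rationals to dyadic rationals), this gives $\bigl(\bigcup_d J_d\bigr)\cap[1/2,1]=\f\bigl(\bigcup_r I_r\bigr)$, hence $\Lambda\setminus\{0\}=\f(\EE)=[1/2,1]\setminus\bigcup_d J_d$; since moreover $J_{1/2}\supseteq(0,1/2)$, adjoining $0$ yields $[0,1)\setminus\bigcup_d J_d=\{0\}\cup\bigl([1/2,1)\setminus\bigcup_d J_d\bigr)=\Lambda\setminus\{1\}$. (The residual mismatch at the single point $x=1$, which belongs to $\Lambda$ as defined in \eqref{lambdachar} but not to $[0,1)$, is an imprecision in the paper's own statement, not in your argument.)
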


\begin{proof}
From Theorem \ref{mth} and Equation \eqref{Edef}. 
\end{proof}


\vskip 0.1cm
\noindent
Note that in the previous construction there is substantial overlapping among the $J_d$. It is possible, however, to 
directly produce all connected components of the complement of $\Lambda$ by a bisection algorithm: namely, one can 
produce a new component in the gap between two previously computed components.

\begin{definition} 
Let $I = [a, b] \subseteq [0,1]$ be a closed interval, $a \neq b$, and let the binary expansions\footnote{If $a$ or $b$ are dyadic numbers, take the expansion of $a$ which terminates with infinitely many zeros and the expansion of 
$b$ with infinitely many ones} of 
$a$ and $b$ be 
$$a = 0.a_1a_2\ldots \qquad b = 0.b_1b_2\ldots$$
Let $n := \min\{ i : a_i \neq b_i \}$. The \emph{binary pseudocenter} of $I$ is the dyadic rational
\be\label{psece}
d(I) := 0.a_1\ldots a_{n-1} 1
\ee
\end{definition}

By translating  one gets the following description of $\Lambda$:

\begin{proposition}
Let $F_0 = [0, 1]$, and for any $n$ define recursively
$$\mathcal{G}_n := \{ I \mid I \textup{ is a connected component of }F_n,\ I\textup{ not an isolated point } \}$$
$$F_{n+1} := F_n \setminus \bigcup_{I \in \mathcal{G}_n} J_{d(I)}$$
Then $$\Lambda = \bigcap_{n \in \mathbb{N}} F_n$$
\end{proposition}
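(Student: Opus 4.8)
The plan is to show that the bisection algorithm, which builds $\Lambda$ by filling gaps with carefully chosen dyadic intervals $J_{d(I)}$, produces the same complement as the (overlapping) union appearing in Proposition \ref{Lambda_complement}. Since the inclusion $\Lambda \subseteq \bigcap_n F_n$ amounts to checking that every removed interval $J_{d(I)}$ is genuinely disjoint from $\Lambda$, and the reverse inclusion amounts to showing that no point of the complement $[0,1)\setminus\Lambda$ survives all stages, I would organize the proof around these two containments.

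First I would verify that the algorithm only removes legitimate gaps, i.e.\ that $\bigcap_n F_n \supseteq \Lambda$. The point is that the binary pseudocenter $d(I)$ of a component $I=[a,b]$ is chosen so that $J_{d(I)}$ lies in the interior of $I$ and hence inside a gap of $\Lambda$. Concretely, writing $n=\min\{i:a_i\neq b_i\}$, one has $a_n=0$, $b_n=1$ (after fixing the terminating/non-terminating conventions for dyadic endpoints), and $d(I)=0.a_1\dots a_{n-1}1$ has $r^-=0.\overline{\omega^*}$ and $r^+=0.\overline{\omega\hat\omega}$ satisfying $a<r^-<r^+<b$; this is a direct comparison of binary expansions. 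Since each $J_{d(I)}$ is one of the dyadic intervals removed in Proposition \ref{Lambda_complement}, every point deleted by the algorithm already lies outside $\Lambda$, so $\Lambda$ survives in $\bigcap_n F_n$.

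The substance is the reverse inclusion $\bigcap_n F_n \subseteq \Lambda$, equivalently that every maximal gap $J_d$ of $\Lambda$ is eventually removed by the bisection. I would argue by induction on the ``depth'' of a gap in the natural tree of dyadic intervals: the largest gaps are removed immediately, and any gap $J_d$ is nested between two previously produced components whose common enclosing interval $I$ has pseudocenter exactly matching $d$. The key lemma is that if $I=[a,b]$ is a component of some $F_n$ with $a,b$ endpoints of already-removed maximal gaps, then the binary pseudocenter $d(I)$ is precisely the dyadic rational governing the next gap to appear between them; this is where the specific formula \eqref{psece} does the work. Combinatorially this mirrors the Stern--Brocot/Farey bisection on the continued-fraction side, transported by the homeomorphism $\varphi$ of Theorem \ref{mth}, so the cleanest route may well be to pull the whole statement back through $\varphi$ and invoke the corresponding bisection description of $\mathcal{E}$ from \cite{CT}.

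The main obstacle I expect is bookkeeping the dyadic endpoint conventions and confirming that the pseudocenter formula always selects the correct next gap rather than skipping one or landing in an already-filled region. In particular one must rule out that two distinct maximal gaps get conflated or that the recursion stalls at an isolated point (which is why $\mathcal{G}_n$ excludes isolated points). I would handle this by a careful comparison of the binary strings $\omega,\omega^*,\hat\omega$ defining $r^-,r^+$ against the endpoints $a,b$, showing the produced gap is maximal and sits strictly between the nearest neighbours; transporting via $\varphi$ and $?$ to the Farey side, where the analogous ordering is the standard mediant construction, should make this monotonicity transparent and avoid grinding through the binary arithmetic directly.
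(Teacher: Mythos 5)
Your proposal is correct, and its recommended route is exactly the paper's proof: the paper disposes of this proposition in one line by pulling the statement back through the homeomorphism $\varphi$ of Theorem \ref{mth} and citing the corresponding bisection description of $\mathcal{E}$ (Proposition 2.11 of \cite{CT}). The direct binary-side argument you sketch (pseudocenter minimality plus eventual exhaustion of every gap) is essentially a re-proof, transported by $\varphi$, of that cited proposition and of the structure theory of overlapping intervals behind it, so deferring to the transport route was the right call.
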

 
\begin{proof}
From Proposition 2.11 of \cite{CT} and Theorem \ref{mth}. 
\end{proof}


\subsection{Period doubling}\label{pedou}

Period doubling bifurcation is a well known phenomenon which occurs with a universal structure in families of unimodal maps. 
Period doubling cascades are in one-to-one correspondence with \emph{periodic windows} consisting of isolated points in the set $\Lambda \setminus \{0\}$, as shown in 
\cite{AC1}, \cite{IP}. We will see how similar windows of isolated points occur in $\EE$, and in both cases
the combinatorial pattern can be understood in terms of the Thue-Morse sequence ${\bf t}:=(t_n)_{n=0}^{+\infty}=(0, 1, 1, 0, 1, 0, 0, 1, 1, ...)$.
Recall that ${\bf t}$ can be defined in terms of the power series expansion 
 \be\label{eq:TM}
\prod_{k\geq 0} (1-z^{2^k})= 1-z-z^2+z^3+\cdots = \sum_{n\geq 0}
(-1)^{t_n}z^n \ee

\begin{definition}
Define the map $\Delta$ on the set of finite binary words as 
\be\label{uno}
\Delta (\eta) := \eta \,1\, {\hat \eta}
\ee
Moreover, for any finite binary word $\eta$, let $\tau_0(\eta)$ be the rational number $0. \overline{\eta 0}$, and let for any $j$
\be\label{due}
\quad \tau_j(\eta):=\tau_0(\Delta^j(\eta)) \qquad 
 \tau_\infty(\eta) = \lim_{j\to \infty} \tau_j(\eta)\quad
 \ee
The \emph{periodic window} associated to $\eta$ is 
$$W_\eta = [\tau_0(\eta), \tau_\infty(\eta))$$
\end{definition}

Let $x \in \Lambda$ be a point with periodic binary expansion, and let $x = 0.\overline{\eta0}$ with $\eta0$ minimal period. Then 
the periodic window associated to $\eta$ contains exactly the sequence of isolated points just constructed:
$$\Lambda \cap W_\eta = \bigcup_{j \geq 0} \tau_j(\eta)$$

\noindent
Let us call {\sl generating number} of the window $W_\eta$ the pseudocenter $d_0(\eta)$ of the first interval $[\tau_0(\eta), \tau_1(\eta)]$.

\vskip 0.4cm
\noindent
\noindent
{\sc Examples.} 

\noindent 
If $\eta = \epsilon$, the empty word, we obtain  the sequence $\{\tau_j(\epsilon)\}_{j\geq 0}=\{0, \frac 2 3, \frac 4 5, \frac {14}{17}, \dots\}$ which converges to $\tau_\infty(\epsilon) = 0.824908...$, and the generating number of $W_\epsilon$ is $d_0(\epsilon)=1/2$.
Note that the binary expansion of $\tau_\infty(\epsilon) = 0.11010011001011...$ is the Thue-Morse sequence.

\noindent
Taking $\eta = (11)$ we get the sequence $\{\tau_j(11)\}_{j\geq 0}=\{\frac 6 7, \frac 8 9, \frac {58}{65}, \dots\}$ which converges to the irrational number $\tau_\infty(11)=0.892360...$ and 
belongs 
to the next-to-largest window $W_{11}$ in $\Lambda$. Its generating number is $d_0(11)=7/8$.






\vskip 0.3 cm

Windows of isolated points are present also in the set $\cal E$ and
can be generated with a similar procedure, introduced in \cite{CMPT}
and \cite{CT} without connection to bifurcations of unimodal
maps. Indeed, let $I_r$ be a maximal quadratic interval, and let $r =
[0; S_0] = [0; S_1]$ be the continued fraction expansions of its
pseudocenter, in such a way that $S_0$ has even length and $S_1$ has
odd length. Let us define $\Sigma_0 := S_0$, $\Sigma_1 := S_1$ and
$\Sigma_{n+1}$ is generated from $\Sigma_n$ via the substitutions $S_1
\rightarrow S_1 S_0$, $S_0 \rightarrow S_1 S_1$. If we denote

$$\alpha_n(r) := [0; \overline{\Sigma_n}] \qquad \alpha_\infty(r) := \lim_{n \to \infty} \alpha_n(r)$$
it follows immediately from \cite{CT}, Proposition 3.9 that

\begin{lemma} 
For each $I_r$ maximal quadratic interval of pseudocenter $r$, 
$$\EE\cap (\alpha_\infty(r), \alpha_0(r)] = \bigcup_{n \geq 0} \alpha_n(r)$$
\end{lemma}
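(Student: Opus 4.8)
The plan is to leverage the homeomorphism $\varphi$ from Theorem \ref{mth} to transport the period-doubling window structure on the $\Lambda$ side (the statement about $\Lambda \cap W_\eta$) over to $\EE$, rather than proving the $\EE$ statement from scratch. The excerpt tells us this ``follows immediately from \cite{CT}, Proposition 3.9,'' so the intended route is presumably a direct combinatorial argument on continued-fraction strings. The cleanest self-contained approach would be to first establish that the substitution dynamics defining $\Sigma_n$ on the continued-fraction side corresponds, under $\varphi$, exactly to the map $\Delta(\eta) = \eta\,1\,\hat\eta$ on the binary side.

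First I would make the correspondence between the two string rules precise. Recall that $\varphi$ sends $[0;\overline{\Sigma_n}]$ to a binary periodic point, and that the substitutions $S_1 \to S_1 S_0$, $S_0 \to S_1 S_1$ on partial-quotient strings should match the doubling rule on binary words. Concretely, I would write $r = [0;S_0] = [0;S_1]$ with $S_0$ of even length and $S_1$ of odd length, and verify that the dyadic word $\eta$ with $\varphi(r)$-pseudocenter satisfies $\varphi(\alpha_n(r)) = \tau_{?}(\eta)$ up to reindexing, using that $\varphi$ turns the concatenation/substitution on partial quotients into the block operation $\eta \mapsto \eta\,1\,\hat\eta$. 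The orientation-reversing nature of $\varphi$ means the interval $(\alpha_\infty(r), \alpha_0(r)]$ on the $\EE$ side maps to a half-open window $[\tau_0(\eta), \tau_\infty(\eta))$ of the form $W_\eta$ on the $\Lambda$ side, with the endpoints swapped; the half-open conventions have to be tracked carefully here.

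Once the dictionary $\varphi(\alpha_n(r)) = \tau_{j(n)}(\eta)$ is in place, the lemma reduces to the already-stated window identity $\Lambda \cap W_\eta = \bigcup_{j\geq 0}\tau_j(\eta)$: since $\varphi$ is a homeomorphism carrying $\EE$ onto $\Lambda\setminus\{0\}$ (Theorem \ref{mth}), applying $\varphi^{-1}$ to that identity and restricting to the window $\varphi^{-1}(W_\eta) = (\alpha_\infty(r), \alpha_0(r)]$ yields $\EE \cap (\alpha_\infty(r), \alpha_0(r)] = \bigcup_{n\geq 0}\alpha_n(r)$. Alternatively, citing \cite{CT}, Proposition 3.9 directly, one knows that the intersection of $\EE$ with the gap below $\alpha_0(r)$ consists exactly of the points $\alpha_n(r)$, which is the intrinsic continued-fraction statement.

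\textbf{The main obstacle} I expect is the bookkeeping in the first step: verifying that the substitution $S_1 \to S_1 S_0$, $S_0 \to S_1 S_1$ on partial quotients is conjugate under $\varphi$ to $\Delta(\eta) = \eta\,1\,\hat\eta$. This requires carefully unwinding how $\varphi$ encodes concatenated blocks of partial quotients as alternating blocks of $1$'s and $0$'s, checking that the complementation $\hat\eta$ arises precisely from the parity switch between even-length $S_0$ and odd-length $S_1$, and confirming that the two indexing schemes ($\Sigma_n$ indexed by $n$, $\Delta^j(\eta)$ indexed by $j$) agree after accounting for the orientation reversal. The limits $\alpha_\infty(r)$ and $\tau_\infty(\eta)$ then match by continuity of $\varphi$, but the boundary/open-closed conventions at the two ends of the window are the delicate point to get exactly right.
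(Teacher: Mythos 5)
Your fallback route is, in fact, exactly what the paper does: the paper's entire ``proof'' of this lemma is the sentence that it ``follows immediately from \cite{CT}, Proposition 3.9,'' with no further argument. So in its minimal form your proposal coincides with the paper. Your primary route --- pulling back the window identity $\Lambda \cap W_\eta = \bigcup_{j \geq 0} \tau_j(\eta)$ through the homeomorphism $\varphi$ of Theorem \ref{mth} --- is a genuinely different argument: it derives the $\EE$-side statement from the unimodal side instead of from an external continued-fraction result. Be aware, though, that it is not more self-contained than the citation, since the $\Lambda$-side window identity is itself only quoted in this paper (from \cite{AC1}, \cite{IP}); you are trading one black box for another.

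The real issue is that the step you defer --- checking that $\varphi$ intertwines the substitution $S_0 \to S_1 S_1$, $S_1 \to S_1 S_0$ with the doubling rule $\Delta(\eta) = \eta\,1\,\hat\eta$ --- is the entire mathematical content of your route, and you never carry it out; as written, the primary argument is a plan rather than a proof. For what it is worth, the bookkeeping does close up, and more cleanly than you fear. Write the dyadic pseudocenter as $\varphi(r) = 0.\eta 1\overline{0} = 0.\eta 0 \overline{1}$ and take this $\eta$ as the partner word (so $\eta$ begins with $1$ and $\tau_0(\eta) \geq 1/2$, which rules out the spurious point $0 \notin \varphi(\EE)$). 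Since $\alpha_0(r) = [0;\overline{S_0}]$ and $\alpha_1(r) = [0;\overline{S_1}]$ are precisely the two endpoints of the maximal interval $I_r$, a direct computation gives $\varphi(\alpha_0(r)) = 0.\overline{\eta 0} = \tau_0(\eta)$ and $\varphi(\alpha_1(r)) = 0.\overline{\eta 1 \hat\eta 0} = \tau_1(\eta)$; then an induction on $n$, using that every $\Sigma_n$ with $n \geq 1$ consists of an odd number of partial quotients (so that $\varphi([0;\overline{\Sigma_n}]) = 0.\overline{B_n \widehat{B_n}}$, where $B_n$ is the binary block coding $\Sigma_n$), yields $\varphi(\alpha_n(r)) = \tau_n(\eta)$ with no index shift at all, and $\varphi\bigl((\alpha_\infty(r), \alpha_0(r)]\bigr) = [\tau_0(\eta), \tau_\infty(\eta)) = W_\eta$ exactly as you predicted, so the half-open conventions match on the nose. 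One concrete warning: the paper's own examples pair $r = \tfrac{1}{2}$ with the empty word $\epsilon$, but under the dictionary above the correct partner of $r = \tfrac{1}{2}$ is $\eta = 1$, because $W_\epsilon$ contains the extra point $\tau_0(\epsilon) = 0$ which has no preimage in $\EE$; this off-by-one is precisely the pitfall your last paragraph anticipates, and any write-up of your route must resolve it by fixing the choice of $\eta$ as above.
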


Limit points $\alpha_\infty(r)$ of these cascades have an aperiodic continued fraction expansion 
with a common combinatorial structure which is again related to the Thue-Morse sequence ${\bf t} = (t_n)_0^\infty$: indeed, for any $r$
$$\alpha_\infty(r) = [0; S_{k_1}, S_{k_2}, S_{k_3}, \dots]$$
where the sequence $K :=(k_1, k_2, k_3,...)= (1,0,1,1...)$ is such that $\tau(K) = \sum_{j=0}^{+\infty} t_j 2^{-j}$.
For instance, if $r = \frac{1}{2}$, 
then\footnote{Notice that the sequence of partial quotients of $\alpha_\infty(\frac{1}{2})$ coincides with the first difference sequence
of \newline
$c =1\;3\;4\,5\;7\;9\;11\;12\;13\; \cdots$
defined as $n\in {\bf \rm c} \iff 2n \notin {\bf \rm c}$.} 
$\alpha_\infty(\frac{1}{2}) = [\,0\,; 2, 1 ,1 ,2 , 2, 2, 1, 1 ,2, \dots ]
 \cong 0.38674997...$ is the accumulation point considered in \cite{CMPT} and \cite{KSS}.

We end this section with a precise characterization of the points $\{ \tau_j(\eta) \}_{j \ge 0}$ in $\Lambda$ and of their 
accumulation point $\tau_\infty(\eta)$, which we prove to be a transcendental number.

\begin{lemma}\label{lemmino} Let $\eta= \eta_1\dots\eta_p$ be a finite binary word of length $p\ge 0$ such that 
$\tau_0(\eta) \in \Lambda$ 
and define recursively the integers:
\begin{itemize}
\item $P_0 := \sum_{i = 1}^p \eta_i 2^{p+1-i}$
\item $P_{j+1} := (P_j + 2)(2^{2^j(p+1)}-1)$
\item $Q_j := 2^{2^j(p+1)} -1$
\end{itemize}
Then 
$\tau_j(\eta) \in \Lambda$ for every $j$, and $\tau_{j}(\eta)=P_j/Q_j$.
Moreover, the binary pseudocenters are given by
$$d_j(\eta) = 2^{-2^j(p+1)}(P_j + 1) = \frac{P_j + 1}{Q_j + 1}$$
\end{lemma}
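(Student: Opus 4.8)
The plan is to run a single induction on $j$, working throughout with the purely periodic binary expansions and postponing the membership in $\Lambda$ to the very end. Write $\langle w\rangle$ for the nonnegative integer whose binary representation (most significant digit first) is a finite word $w$, so that $0.\overline{w}=\langle w\rangle/(2^{|w|}-1)$ and $\langle uv\rangle=\langle u\rangle 2^{|v|}+\langle v\rangle$ for a concatenation. Since $\tau_j(\eta)=0.\overline{W_j}$ with $W_j:=\Delta^j(\eta)\,0$, the entire statement reduces to tracking the length $|W_j|$ and the value $\langle W_j\rangle$.

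First I would fix the lengths, which pin down the denominators. From $|\Delta(w)|=|w\,1\,\hat w|=2|w|+1$ one gets $|W_{j+1}|=|\Delta^{j+1}(\eta)|+1=2(|\Delta^j(\eta)|+1)=2|W_j|$, and since $|W_0|=p+1$ this gives $|W_j|=2^j(p+1)$. Hence the denominator of $0.\overline{W_j}$ is exactly $2^{2^j(p+1)}-1=Q_j$, while $Q_{j+1}=2^{2|W_j|}-1$. The base case is immediate: reading $\eta 0$ as an integer gives $\langle\eta 0\rangle=\sum_{i=1}^p\eta_i2^{p+1-i}=P_0$, so $\tau_0=P_0/Q_0$.

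The computational core is the induction on numerators. Set $\mu:=\Delta^j(\eta)$, $L:=|W_j|=2^j(p+1)$ and $m:=\langle\mu\rangle$, so that $|\mu|=L-1$ and $P_j=\langle\mu 0\rangle=2m$. Using the complementation identity $\langle\hat\mu\rangle=2^{L-1}-1-m$ and the concatenation rule, a short build-up gives
$$\langle\mu\,1\,\hat\mu\,0\rangle=2\big[(2m+1)2^{L-1}+(2^{L-1}-1-m)\big]=2(m+1)(2^L-1).$$
Since $2(m+1)=P_j+2$ and $2^L-1=Q_j$, the right-hand side equals $(P_j+2)(2^{2^j(p+1)}-1)=P_{j+1}$; together with $Q_{j+1}=2^{2L}-1$ this yields $\tau_{j+1}=P_{j+1}/Q_{j+1}$ and closes the induction. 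For the pseudocenters I would compare endpoints: $\tau_j=0.\overline{\mu 0}$ and $\tau_{j+1}=0.\overline{\mu 1\hat\mu 0}$ agree in their first $L-1$ digits (namely $\mu$) and first differ at position $L$, where $\tau_j$ has $0$ and $\tau_{j+1}$ has $1$. This simultaneously shows $\tau_j<\tau_{j+1}$ (so the sequence is strictly increasing) and, by the definition of binary pseudocenter, that $d_j(\eta)=0.\mu 1$; reading it off, $d_j=\langle\mu 1\rangle/2^L=(2m+1)/2^{2^j(p+1)}=(P_j+1)/(Q_j+1)$.

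It remains to establish $\tau_j(\eta)\in\Lambda$ for all $j$, and this is the one step that is not mere bookkeeping. I would deduce it from the hypothesis $\tau_0(\eta)\in\Lambda$ together with the description $\Lambda\cap W_\eta=\bigcup_{j\ge0}\tau_j(\eta)$ of the period-doubling window recalled above: the strict monotonicity just proved gives $\tau_0\le\tau_j<\tau_\infty$, so $\tau_j\in W_\eta$, whence $\tau_j\in\Lambda$. The genuine obstacle is therefore hidden in that window identity, i.e. in the assertion that the period-doubling substitution $\Delta(\eta)=\eta\,1\,\hat\eta$ sends $\Lambda$-admissible periodic points to $\Lambda$-admissible ones; a self-contained argument would have to verify $T^k(\tau_{j+1})\le\tau_{j+1}$ for all $k$ directly from $\tau_j\in\Lambda$, analyzing the action of $T$ on the word $\overline{\mu 1\hat\mu 0}$ according to the parity of the number of $1$'s already consumed. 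Everything else is elementary algebra of binary words.
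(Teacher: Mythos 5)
Your proposal is correct and follows essentially the same route as the paper: both proofs reduce everything to the identity $0.\overline{w}=\langle w\rangle/(2^{|w|}-1)$, verify that the substitution $\xi 0\mapsto \xi 1\hat\xi 0$ multiplies the numerator according to $P\mapsto (P+2)(2^{|\xi|+1}-1)$, read off the pseudocenter as $0.\xi 1$ from the first differing digit, and leave the membership $\tau_j(\eta)\in\Lambda$ to the period-doubling window identity $\Lambda\cap W_\eta=\bigcup_{j\ge 0}\tau_j(\eta)$ stated (with reference to the unimodal-maps literature) just before the lemma. Your write-up is in fact slightly more explicit than the paper's, since you carry out the concatenation/complementation computation and the monotonicity $\tau_j<\tau_{j+1}$ that the paper only asserts.
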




\begin{proof} 
We need the following elementary identity
\be\label{bina}
0.\overline{\omega_1\dots \omega_s} = \frac {2^s}{2^s-1} \sum_{i=1}^s \frac {\omega_i}{2^i}
\ee
for any binary word $\omega_1\dots \omega_s$.
$P_0$ is the integer whose binary expansion is $\eta 0$, hence from \eqref{bina} $\tau_0 = P_0/Q_0$.
Moreover, if $P$ is the integer whose binary expansion is $\xi0$, with $\xi$ a binary string, then the 
number whose binary expansion is $\xi 1 \hat{\xi} 0$ is $P' = (P+2)(2^{|\xi|+1}-1)$.
By the recursive formulas \eqref{uno} and \eqref{due} and identity \eqref{bina} one has
$\tau_j = P_j /Q_j$ for all $j \geq 1$.
Finally, the binary pseudocenter of the interval $[0.\overline{\xi0}, 0.\overline{\xi1\hat{\xi}0}]$ is 
$d = 0.\xi1 = (P+1)2^{-(|\xi|+1)}$
hence the lemma follows by taking $P = P_j$.
\end{proof}
Furthermore, let us consider the generating function (cfr. Equation (\ref{eq:TM}))

\be \Xi (z): = \prod_{k\geq 0} (1-z^{2^k})
\ee
This function satisfies the functional equation
$\Xi (z)=(1-z) \Xi (z^2)$, from which we see that all its zeroes lie on the unit circle and are actually dense there (see also \cite{Is}). 

\begin{proposition} \label{xi}
Let $\eta =\eta_1\dots\eta_p$ be a finite binary word of length $p\ge 0$ such that 
$\tau_0(\eta) \in \Lambda$. 
Then
\begin{equation} \label{transc}
\tau_\infty(\eta)=1- \Big(1 - d_0(\eta)\Big)\ \Xi \left( \frac 1 {2^{p+1}} \right)
\end{equation}
In particular, $\tau_\infty(\eta)$ is transcendental. 
\end{proposition}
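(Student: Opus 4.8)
The plan is to first turn the recursive description of Lemma \ref{lemmino} into a closed product and then recognize that product as a value of $\Xi$. The two recursions $P_{j+1}=(P_j+2)Q_j$ and $Q_{j+1}=Q_j(Q_j+2)$ (the latter because $Q_{j+1}=2^{2^{j+1}(p+1)}-1=(Q_j+1)^2-1=Q_j(Q_j+2)$) collapse the ratio to $\tau_{j+1}=P_{j+1}/Q_{j+1}=(P_j+2)/(Q_j+2)$. From this I would compute $1-\tau_{j+1}=(Q_j-P_j)/(Q_j+2)$ and compare with $1-\tau_j=(Q_j-P_j)/Q_j$ to obtain the one-step relation
$$1-\tau_{j+1}=(1-\tau_j)\,\frac{Q_j}{Q_j+2}=(1-\tau_j)\,\frac{2^{2^j(p+1)}-1}{2^{2^j(p+1)}+1}.$$

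Next I would telescope this identity from $j=0$ and pass to the limit, which is legitimate since all factors lie in $(0,1)$ and tend to $1$ doubly exponentially fast, so the infinite product converges to a nonzero value. Setting $z:=2^{-(p+1)}$, each factor becomes $(1-z^{2^j})/(1+z^{2^j})$, so
$$1-\tau_\infty(\eta)=(1-\tau_0)\prod_{j\geq 0}\frac{1-z^{2^j}}{1+z^{2^j}}.$$
The key simplification is the classical binary-digit identity $\prod_{j\geq 0}(1+z^{2^j})=1/(1-z)$ (unique binary representations of integers), which turns the denominator product into $1/(1-z)$ and hence identifies the whole product with $(1-z)\,\Xi(z)$. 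It then remains to trade the normalization $\tau_0$ for the generating number $d_0(\eta)$: using $d_0(\eta)=(P_0+1)/(Q_0+1)$ with $Q_0+1=2^{p+1}$ from Lemma \ref{lemmino}, a short computation gives $1-\tau_0=(1-d_0(\eta))\,\frac{w}{w-1}$ with $w:=2^{p+1}$, while $1-z=(w-1)/w$. The two extra factors cancel and one lands exactly on $\tau_\infty(\eta)=1-(1-d_0(\eta))\,\Xi(2^{-(p+1)})$.

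For the transcendence I would rely on the Mahler functional equation $\Xi(z)=(1-z)\,\Xi(z^2)$ already recorded above. Since $d_0(\eta)$ is a dyadic rational with $0<d_0(\eta)<1$, the number $\tau_\infty(\eta)$ is transcendental if and only if $\Xi(2^{-(p+1)})$ is. The point $2^{-(p+1)}$ is a rational (hence algebraic) number in $(0,1)$, and $\Xi$ is a \emph{non-rational} analytic function on the unit disc, precisely because its zeros are dense on the unit circle, as noted after the definition of $\Xi$. Mahler's method applied to the degree-$2$ equation $\Xi(z)=(1-z)\Xi(z^2)$ then yields that $\Xi(\alpha)$ is transcendental for every algebraic $\alpha$ with $0<|\alpha|<1$; specializing to $\alpha=2^{-(p+1)}$ concludes the proof.

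The two telescoping computations are routine; the main obstacle is the transcendence step, where one must check that $\Xi$ genuinely satisfies the hypotheses of Mahler's transcendence criterion — convergence of the iterates $\alpha^{2^j}\to 0$, analyticity, and above all the non-rationality of $\Xi$ — rather than merely exhibiting the functional equation.
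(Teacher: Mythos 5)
Your proof is correct and is essentially the paper's own argument: both telescope the recursion of Lemma \ref{lemmino} into an infinite product converging to $\Xi(2^{-(p+1)})$ and then conclude by Mahler's theorem that $\Xi$ takes transcendental values at algebraic points $a$ with $0<|a|<1$. The only difference is bookkeeping: the paper iterates $Q_j-P_j=Q_{j-1}(Q_{j-1}-P_{j-1})$, so the partial products of $\Xi$ appear at once and the normalization enters via $Q_0-P_0=2^{p+1}\big(1-d_0(\eta)\big)$, whereas you telescope $1-\tau_{j+1}=(1-\tau_j)\,\frac{1-z^{2^j}}{1+z^{2^j}}$ with $z=2^{-(p+1)}$ and then need the classical identity $\prod_{j\ge 0}(1+z^{2^j})=(1-z)^{-1}$ together with the trade $(1-\tau_0)(1-z)=1-d_0(\eta)$ to land on the same formula --- an equivalent computation.
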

\begin{proof} Setting $\tau_{j}(\eta)=P_j/Q_j$
and using Lemma \ref{lemmino} we get for $j\geq 1$
$$
Q_j-P_j=(2^{(p+1)2^{j-1}}-1)(Q_{j-1}-P_{j-1})=\cdots = (Q_0 - P_0)\, \prod_{k=0}^{j-1}(2^{(p+1)2^{k}}-1)\, .
$$
Therefore
$$
\tau_{j}(\eta)=1- (Q_0 - P_0)\, \frac {\prod_{k=0}^{j-1}(2^{(p+1)2^{k}}-1)} {2^{(p+1)2^{j}}-1} = 1 - \frac{\prod_{k = 0}^{j-1}(1 - 2^{-2^l(p+1)})}{1 - 2^{-2^j(p+1)}}(1-d_0(\eta))
$$
where we used the relation $Q_0 - P_0 = 2^{p+1}(1 - d_0(\eta))$ given by Lemma \ref{lemmino}. The claimed identity follows by taking the limit. Finally, Mahler proved in (\cite{M}, page 363) that $\Xi (a)$ is transcendental for every algebraic number $a$, with $0<|a|<1$. 
\end{proof}

\noindent
Examples of $\tau_\infty(\eta)$ are 
$$
\tau_\infty(\epsilon)=1- \frac 1 {2}\, \Xi \left( \frac 1 {2} \right),\quad \tau_\infty(11) = 
1- \frac 1 {8}\, \Xi \left( \frac 1 {8} \right) \quad \text{and} \quad \tau_\infty(1101) = 1 - \frac{5}{32}\Xi\left(\frac{1}{32}\right) 
$$

\vskip 0.5 cm
\noindent
A corresponding transcendence result can be given for $\mathcal{E}$. Namely, accumulation points of period 
doubling cascades $\alpha_\infty(r)$ have continued fraction expansion given by the substitution rule 
explained above, hence they are transcendental by \cite{AB}. 

It has been widely conjectured that all numbers with non-eventually periodic bounded partial quotients 
are transcendental. Let us point out that such a statement is equivalent to the transcendence of all 
non-quadratic irrationals in $\mathcal{E}$. Similarly, one might ask whether all irrational points in $\Lambda$ are transcendental
 as well.

\subsection{The topology of bifurcation sets}  \label{top}

By using the bisection algorithm and Proposition \ref{xi} it is immediate to determine the topology of bifurcation sets, namely

\begin{proposition} \label{topology}
Let $\mathbb{Q}_1$ be the set of rational numbers with odd denominator. Then 
\begin{enumerate}
\item The set $\Lambda \cap \mathbb{Q}_1$ is dense in $\Lambda$.
\item The derived set $\Lambda'$ (i.e. $\Lambda$ minus its isolated points) is a Cantor set. 
As a corollary, 
$\Lambda'$ and $\Lambda$ have cardinality $2^\omega$.
\item Let $C$ be the usual Cantor middle-third set, let $\Omega_k = (\alpha_k, \beta_k)$, $k \geq 1$ be the connected components of the complement 
$[0, \infty) \setminus C$. For each $k$, pick a countable set of points $P_k \subseteq \Omega_k$ which accumulate only on $\alpha_k$. Then 
$\Lambda$ is homeomorphic to $C \cup \bigcup_{k \geq 1} P_k$.
\item Any open neighbourhood of an element $\tau \in \Lambda \setminus \mathbb{Q}_1$ contains a subset of $\Lambda$ which is homeomorphic to $\Lambda$ itself (fractal structure). 
\end{enumerate}
\end{proposition}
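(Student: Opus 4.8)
The plan is to read the four statements off the explicit picture of $\Lambda$ furnished by Proposition \ref{Lambda_complement}, the bisection algorithm and the window analysis of Section \ref{pedou}, using the transcendence of the numbers $\tau_\infty(\eta)$ from Proposition \ref{xi} as the one genuinely arithmetic ingredient; throughout I would pass freely between $\Lambda$ and $\mathcal{E}$ via the homeomorphism $\varphi$ of Theorem \ref{mth}, and I expect to prove the parts in the order (1), (2), (3), (4), since the description of the complementary intervals of $\Lambda'$ obtained for (2) is exactly what is needed for (3), while (4) is essentially independent. For (1), I would first note that $\Lambda$ is nowhere dense, since its complement $\bigcup_d J_d$ is dense (the set $\mathcal{E}$ has empty interior by \cite{CT} and $\varphi$ is a homeomorphism). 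Every connected component of the complement is an open interval whose endpoints have, by construction, a \emph{purely periodic} binary expansion of the form $0.\overline{\omega^*}$ or $0.\overline{\omega\hat\omega}$; by the identity \eqref{bina} a purely periodic binary number has odd denominator, hence lies in $\mathbb{Q}_1$, and being boundary points of an open set they belong to the closed set $\Lambda$. As $\Lambda$ is nowhere dense these endpoints accumulate onto every point of $\Lambda$, which gives (1).

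For (2) the only nonformal issue is perfectness ($\Lambda'$ is closed and bounded, hence compact, and totally disconnected as a subset of the nowhere dense $\Lambda$; it is nonempty since $\tau_\infty(\epsilon)\in\Lambda'$). The heart of the matter is to identify the complementary intervals of $\Lambda'$. The window lemma of Section \ref{pedou} gives $\Lambda\cap W_\eta=\{\tau_j(\eta)\}_{j\ge0}$, and I would use it, together with the analysis of $\mathcal{E}$ in \cite{CT}, to show that the isolated points of $\Lambda$ are precisely the interior cascade points, that sub-windows produced by further period doubling nest inside, and that consequently the complementary intervals of $\Lambda'$ are exactly the coalesced windows $(\tau_0(\eta),\tau_\infty(\eta))$. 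Each such interval has a \emph{rational} left endpoint $\tau_0(\eta)=0.\overline{\eta0}$ and, by Proposition \ref{xi}, a \emph{transcendental} right endpoint $\tau_\infty(\eta)$. A point isolated in $\Lambda'$ would have to be the common endpoint of two adjacent complementary intervals, i.e.\ simultaneously some $\tau_\infty(\eta)$ and some $\tau_0(\eta')$; since a transcendental number cannot equal a rational one, no such point exists, so every point of $\Lambda'$ is a one- or two-sided limit of $\Lambda'$ and $\Lambda'$ is perfect. Being compact, totally disconnected and perfect, it is a Cantor set, whence $|\Lambda'|=|\Lambda|=2^{\aleph_0}$.

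For (3) I would write $\Lambda=\Lambda'\sqcup\{\text{isolated points}\}$ and use the description just obtained: inside each complementary interval $(\tau_0(\eta),\tau_\infty(\eta))$ of $\Lambda'$ the isolated points are a single monotone sequence $\{\tau_j(\eta)\}_{j\ge1}$ accumulating at the one endpoint $\tau_\infty(\eta)\in\Lambda'$. I would take any homeomorphism $h$ of the Cantor set $\Lambda'$ onto $C$ carrying complementary intervals to the gaps $\Omega_k$ and sending $\tau_\infty(\eta)$ to $\alpha_k$, then map each cascade onto a countable set $P_k\subset\Omega_k$ accumulating at $\alpha_k$, and glue these bijections to $h$ to obtain the desired homeomorphism $\Lambda\cong C\cup\bigcup_kP_k$. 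For (4) I would transport the statement to $\mathcal{E}$, where the substitution $S_1\mapsto S_1S_0$, $S_0\mapsto S_1S_1$ of Section \ref{pedou} extends from generating a single cascade to an embedding of the coding of $\mathcal{E}$ into itself whose image is a rescaled copy of $\mathcal{E}$ clustering at $\alpha_\infty(r)$; pulling back by $\varphi$ gives copies of $\Lambda$ clustering at each $\tau_\infty(\eta)$, and since the $\tau_\infty(\eta)$ are dense in $\Lambda'\supseteq\Lambda\setminus\mathbb{Q}_1$ and occur at every scale, every neighbourhood of a point of $\Lambda\setminus\mathbb{Q}_1$ contains such a copy.

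The step I expect to be most delicate is the identification of the complementary intervals of $\Lambda'$ in (2): one must check through the window analysis that the period-doubling windows nest correctly, so that exactly one cascade, accumulating at a single transcendental endpoint, is shed into each gap and no ``clean'' gaps survive. The subtle point is that a window \emph{base} $\tau_0(\eta)$ can itself be a purely periodic, hence rational, accumulation point of $\Lambda$, so one cannot simply equate the isolated points with $\Lambda\cap\mathbb{Q}_1$; it is precisely the transcendence statement \eqref{transc} that separates the rational left endpoints from the transcendental right endpoints and thereby forces perfectness. The construction of the renormalization embedding underlying (4) is the other place where real work is needed.
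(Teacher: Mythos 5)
Your treatment of parts (1)--(3) is essentially the paper's own proof. Part (1) is identical (endpoints of the complementary intervals $J_d$ are purely periodic in base $2$, hence in $\mathbb{Q}_1$, and are dense because $\Lambda$ is closed with empty interior). For part (2) the paper transfers everything to $\mathcal{E}$ via $\varphi$, identifies the isolated points as $\mathcal{P}\setminus\mathcal{P}_0$ (Lemma \ref{isolated}), writes $\mathcal{E}'$ as the complement of the windows $(\alpha_\infty(\lambda),\lambda)$, $\lambda\in\mathcal{P}_0$, and rules out adjacency of two windows because $\lambda$ is a quadratic irrational while $\alpha_\infty(\lambda')$ is transcendental; you run the same argument on the $\Lambda$ side, ruling out adjacency because $\tau_0(\eta')$ is rational while $\tau_\infty(\eta)$ is transcendental. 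This is the same proof through the same key ingredient (Proposition \ref{xi}), and your rational-versus-transcendental version is if anything slightly cleaner. Your part (3) (extend a homeomorphism $\Lambda'\to C$ so that each cascade is carried to some $P_k$) is also exactly the paper's argument.

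Part (4) is where you have a genuine gap, and also where you miss the easy route. The paper's proof of (4) is ``immediate from 3'': once $\Lambda$ is identified with the model $C\cup\bigcup_k P_k$, any neighbourhood of a non-isolated point contains a clopen piece of $\Lambda$ cut off by two points of the complement, and such a piece is again a compact, nowhere dense set whose derived set is a Cantor set with exactly one cascade per gap accumulating on the appropriate endpoint --- hence homeomorphic to the whole model by the very argument of (3). You instead invoke a ``renormalization embedding'' extending the substitution $S_1\to S_1S_0$, $S_0\to S_1S_1$ from the cascade construction to all of $\mathcal{E}$. This operator is never constructed --- you concede that ``real work is needed'' --- it is not provided by the paper or by \cite{CT}, and it is not even well defined as stated, since a general element of $\mathcal{E}$ has a continued fraction expansion which is not a word in the letters $S_0,S_1$. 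Moreover, your formulation conflicts with the window lemma you yourself quote: since $\Lambda\cap W_\eta$ is just the countable cascade $\{\tau_j(\eta)\}_{j\ge 0}$, no copy of $\Lambda$ can sit inside a window ``clustering at $\tau_\infty(\eta)$''; any tuned copy necessarily extends beyond $\tau_\infty(\eta)$, so to place one inside an arbitrarily small neighbourhood you would additionally need diameter estimates for these copies, which you do not address. Such a tuning structure can in fact be built, but it is a substantial project and entirely unnecessary here: the fractal structure of $\Lambda$ is a purely topological consequence of your own part (3).
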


\noindent 
Note that, even though the proposition was stated for $\Lambda$, the same result holds for $\mathcal{E}$ since they are homeomorphic. 
The only change is that you have to replace $\mathbb{Q}_1$ by the set of 
quadratic irrationals with purely periodic continued fraction expansion.

\begin{proof}
\begin{enumerate}
\item $\Lambda$ is closed with no interior by Proposition \ref{Lambda_complement}, hence the endpoints of the connected components of its complement are dense in it.
The connected components of the complement of $\Lambda$ are intervals of type $J_d$, so their endpoints have purely periodic binary expansion, hence
they belong to $\mathbb{Q}_1$.
\item
 By virtue of the homeomorphism $\varphi$ of Theorem \ref{mth}, 
it is equivalent to prove the same result for $\mathcal{E}$. 
Let $\PP$ denote the points of $\EE$ which are periodic under $G$
$$\PP:=\{\lambda \in \EE \ : \ G^{k_0}(\lambda)=\lambda \ \mbox{ for some } k_0\in \N\}$$
and let us define the set of  {\it primitive
  elements} as
$$\PP_0:=\{\lambda \in \PP \ :  \lambda \mbox{ has even minimal period} \}.$$ 

\begin{lemma} \label{isolated}
The set of isolated points of $\mathcal{E}$ is precisely $\mathcal{P}\setminus \mathcal{P}_0$.
\end{lemma}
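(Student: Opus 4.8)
The plan is to reduce the statement to a count of parities of periods of endpoints of maximal quadratic intervals. The principle I will use repeatedly is the sign convention recorded in Section~\ref{E}: if the rational pseudocenter is written as $[0;A^-]=[0;A^+]$ with $|A^-|$ odd and $|A^+|$ even, then the left endpoint of the associated interval is $[0;\overline{A^-}]$ and the right endpoint is $[0;\overline{A^+}]$. Thus ``odd-length periodic word'' will correspond to ``left endpoint'' and ``even-length'' to ``right endpoint'', and since a left endpoint $[0;\overline{A^-}]$ has minimal period dividing the odd number $|A^-|$, every left endpoint automatically has odd minimal period.

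First I would show that an isolated point of $\mathcal{E}$ lies in $\mathcal{P}\setminus\mathcal{P}_0$. Because $\mathcal{E}$ is closed with empty interior (\cite{CT}, Theorem~1.2) and the connected components of its complement are precisely the maximal intervals, an isolated $x$ has a maximal interval immediately to its right; as $x\in\mathcal{E}$, the left endpoint of that interval must be $x$ itself. Hence $x=[0;\overline{A^-}]$ is purely periodic under $G$, so $x\in\mathcal{P}$, and by the convention above its minimal period is odd, so $x\notin\mathcal{P}_0$.

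For the reverse inclusion I would take $x\in\mathcal{P}\setminus\mathcal{P}_0$, write $x=[0;\overline{B}]$ with $|B|$ the (odd) minimal period, and set $r:=[0;B]$. Since $B$ has odd length, $x$ is the left endpoint of the quadratic interval $I_r$. Now $I_r$ is contained in a unique maximal interval $I_{r'}$; the left endpoint of $I_{r'}$ cannot be strictly below $x$, since otherwise $x$ would be interior to the open interval $I_{r'}$, contradicting $x\in\mathcal{E}$. Therefore $x$ is the left endpoint of $I_{r'}$, i.e. $x=\alpha_1(r')$. I then invoke the period-doubling cascade lemma for $I_{r'}$, which gives $\mathcal{E}\cap(\alpha_\infty(r'),\alpha_0(r')]=\{\alpha_n(r')\}_{n\ge 0}$: the neighbourhood $(\alpha_2(r'),\alpha_0(r'))$ of $x$ then meets $\mathcal{E}$ only at $\alpha_1(r')=x$, so $x$ is isolated.

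The main obstacle is this last inclusion, and specifically the passage from ``left endpoint of some quadratic interval'' to ``left endpoint of a maximal interval'': it is the hypothesis $x\in\mathcal{E}$, fed into the nesting structure of quadratic intervals from \cite{CT}, that forces the enclosing maximal interval to share the endpoint $x$. A secondary point to handle with care is the parity bookkeeping when the minimal word $B$ ends in the digit $1$, where one must pass to the standard expansion $[0;b_1,\dots,b_{p-1}+1]$ of $r$ before identifying the endpoints, together with the verification that the cascade indeed opens a gap immediately to the left of $\alpha_1(r')$.
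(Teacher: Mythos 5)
Your argument is correct, and it is actually more complete than the one in the paper. The paper's proof is a single sentence establishing only the forward inclusion: an isolated point of $\mathcal{E}$ separates two adjacent maximal quadratic intervals, hence is the left endpoint of the rightmost one and therefore has odd minimal period, with the structural facts (and, in effect, the whole converse inclusion) delegated to the citation of \cite{CT}. Your first half is that same argument, correctly observing that one only needs the maximal interval immediately to the right of $x$. The substantive addition is your proof of the reverse inclusion: realizing $x\in\mathcal{P}\setminus\mathcal{P}_0$ as the left endpoint of $I_r$ with $r=[0;B]$, using the containment of $I_r$ in a unique maximal interval $I_{r'}$ together with $x\in\mathcal{E}$ to force the left endpoint of $I_{r'}$ to equal $x=\alpha_1(r')$, and then invoking the period-doubling cascade lemma of Section \ref{pedou} to get the punctured neighbourhood $(\alpha_2(r'),\alpha_0(r'))\cap\mathcal{E}=\{x\}$. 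That lemma is stated in the paper (as a consequence of \cite{CT}, Proposition 3.9), so your proof is self-contained exactly where the paper's is not. Two small remarks. First, the parity bookkeeping you flag at the end requires no extra work: it is absorbed by the convention you set up, since whichever of the two continued fraction expansions of $r$ has odd length is by definition the string of the left endpoint, so $x=[0;\overline{B}]$ with $|B|$ odd is the left endpoint of $I_{[0;B]}$ regardless of whether $B$ ends in $1$. Second, the degenerate point $x=0$ (fixed by $G$, hence formally in $\mathcal{P}\setminus\mathcal{P}_0$, yet not isolated) is a blemish in the statement itself; it is inherited from the paper's definition of $\mathcal{P}$, which both you and the authors implicitly read as requiring a purely periodic continued fraction expansion, excluding $0$.
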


\begin{proof} 
If $\alpha$ is not a limit point of $\mathcal{E}$ then it
is the separating element between two adjacent maximal quadratic
intervals $J_0$ and $J_1$, so $\alpha$ is the left endpoint of the
rightmost interval and has odd minimal period (see \cite{CT}).
\end{proof}
Moreover, if $\lambda\in \mathcal{P}$, we denote with
$\alpha_\infty(\lambda)$ the limit point of the period doubling
cascade generated\footnote{So for instance $\alpha_\infty(g)=
  0.38674997...$ (\cite{CMPT}, page 24).} by $\lambda$ and let
  $W_\lambda:=(\alpha_\infty(\lambda),\lambda)$ be the corresponding period doubling
  window.  We also set
  $\cal{P}_\infty:=\{\alpha_\infty(\lambda) \ : \ \lambda \in
  \mathcal{P}\}$, hence by Lemma \ref{isolated}

$$\mathcal{E}' = [0, 1) \setminus \bigcup_{\lambda \in \mathcal{P}_0} (\alpha_\infty(\lambda),\lambda).$$
Now, no two intervals of the form $(\alpha_\infty(\lambda), \lambda)$ with $\lambda \in \mathcal{P}_0$ are adjacent to each other, 
since the right endpoint $\lambda$ is always a quadratic irrational while $\alpha_\infty(\lambda)$ is always transcendental 
(see Proposition \ref{xi}), therefore $\mathcal{E}'$ has no isolated points and it is a Cantor set.

\item Every closed subset of the interval with no interior and no isolated points is homeomorphic to the usual Cantor middle-third set 
via a homeomorphism of the ambient interval. Such an extension can be chosen so that it maps any period doubling cascade to some $P_k$. 
\item Immediate from 3.
\end{enumerate}

\end{proof}

Let us remark that similar results have been obtained for the set of univoque numbers in \cite{EHJ}, \cite{KL2}, 
and by Allouche and Cosnard for a similar number set (\cite{A}, \cite{AC1}, \cite{AC2}).
For details on the relations between these sets see Section \ref{univ}. 

Moreover, $\EE$ shares some features with the Markov spectrum, since the Gauss map is the first-return map on a section 
of geodesic flow on the modular surface. Even though the two sets are not homeomorphic, property 2 also holds for the 
Markov spectrum \cite{Mo}.

\subsection{Hausdorff dimension} \label{HD}

In \cite{CT} it is proved that $dim_H \EE = 1$ by providing estimates on the Hausdorff dimension of its segments. 
More precisely, 
for every $K\in \N_+$ we consider
$$\mathcal{E}_K:=\mathcal{E}\cap [1/(K+1), 1/K], \ \ B_K:=\{x=[0;a_1,
  a_2, a_3,...] : a_i\leq K \ \forall i\in \N\}, \ \ \Psi_K(x) :=
\frac{1}{K+x}
$$
and it is easy to check that
\begin{equation}\label{eq:hdk}
\Psi_K(B_{K-1})\subset \mathcal{E}_K\subset B_K,
\end{equation}
therefore\footnote{In fact, the asymptotics of \cite{He}
  yields the estimate $dim_H \mathcal{E}_K= 1-6/\pi^2K + o(1/K)$.}
$dim_H \mathcal{E}=\lim_{K \to \infty} dim_H \EE_K = \lim_{K \to \infty} dim_H B_K = 1$.

We can use our dictionary to obtain analogous results in the linear
setting, where in fact one can explicitely compute the Hausdorff
dimension of segments of $\Lambda$. Let $\Lambda_K:=\Lambda
\cap[1-2^{-K}, 1-2^{-K-1}]$, $\Phi_K(x) := x \mapsto 1-2^{-K}x$ and
$$C_K:=\{ x\in [1/2,1]: \ \underline{x} \mbox{ does not contain
   sequences of } K+1 \mbox{ equal digits} \};$$
using the correspondence of Theorem \ref{mth}, the inclusions \eqref{eq:hdk} become
$\Phi_K(C_{K-1})\subset \Lambda_K\subset C_K$,  and thus 
\begin{equation}\label{eq:hdkl}
dim_H C_{K-1} \leq dim_H \Lambda_K \leq dim_H C_K.
\end{equation}
The set $C_K$ is self-similar, therefore its Hausdorff dimension can be
computed by standard techniques (see \cite{F}, Theorem 9.3). More
precisely, if $a_K(n)$ is the number of binary sequences of $n$ digits
whose first digit is $1$ and do not contain $K+1$ consecutive equal
digits, one has the following linear recurrence:
\footnote{Sequences satisfying this relation are known as \emph{multinacci} sequences, 
being a generalization of the usual Fibonacci sequence; 
the positive roots of their characteristic polynomials are Pisot numbers.} 
\begin{equation}\label{eq:multinacci}
a_K(n+K)=a_K(n+K-1)+...+a_K(n+1)+a_K(n)
\end{equation}
which implies  
\begin{proposition}\label{pisot}
For any fixed integer $K\geq 2$ the Hausdorff dimension of $C_K$ is 
$\log_2(\lambda_k)$, where $\lambda_K$ is the only
positive real root of the characteristic polynomial
$$P_K(t):= t^K- (t^{K-1}+...+t+1)$$
\end{proposition}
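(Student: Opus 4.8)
The plan is to recognize Proposition \ref{pisot} as a standard self-similarity computation, so the real content is (i) identifying the correct iterated function system (IFS) for $C_K$, (ii) verifying the open set condition, and (iii) connecting the box-counting / similarity dimension to the growth rate of the counting sequence $a_K(n)$ governed by the recurrence \eqref{eq:multinacci}. Recall that $C_K = \{ x \in [1/2, 1] : \underline{x} \text{ does not contain } K+1 \text{ equal digits}\}$, where $\underline x$ is the binary expansion of $x$. Since $x \in [1/2,1]$ forces the leading binary digit to be $1$, the set is naturally described by admissible binary words avoiding $K+1$ consecutive equal symbols.

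First I would make the self-similar structure explicit. The admissibility condition ``no block of $K+1$ equal digits'' is a subshift of finite type (a sofic/topological Markov condition) on $\{0,1\}^{\N}$: one can encode the state by the pair (current digit, length of the current run, capped at $K$), giving a finite transition graph, and $C_K$ is the image under the binary coding map $b \mapsto \sum_i b_i 2^{-i}$ of the corresponding shift space. The cylinder sets of length $n$ are intervals of length $2^{-n}$, and the number of admissible length-$n$ words beginning with digit $1$ is exactly $a_K(n)$; this ties the geometry directly to \eqref{eq:multinacci}. I would then invoke \cite{F}, Theorem 9.3, which computes the Hausdorff dimension of such a self-similar set satisfying the open set condition: since all the contraction ratios here equal $1/2$ (each admissible digit refines an interval by a factor $2$) and the pieces have disjoint interiors (distinct length-$n$ cylinders are disjoint dyadic intervals, so the open set condition holds trivially with the open interval as the open set), the Hausdorff dimension equals the box dimension, which is
\[
\dim_H C_K = \limsup_{n \to \infty} \frac{\log_2 a_K(n)}{n}.
\]

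Next I would extract this growth rate from the linear recurrence \eqref{eq:multinacci}. The sequence $a_K(n)$ satisfies $a_K(n+K) = a_K(n+K-1) + \cdots + a_K(n)$, whose characteristic polynomial is precisely $P_K(t) = t^K - (t^{K-1} + \cdots + t + 1)$. By standard theory of linear recurrences, $a_K(n) \sim c\, \lambda_K^{\,n}$ where $\lambda_K$ is the dominant root. The crucial arithmetic facts I would verify are that $P_K$ has a unique positive real root $\lambda_K$, that this root is simple, and that it strictly dominates all other roots in modulus (so that $a_K(n) = \Theta(\lambda_K^n)$ with no cancellation or oscillation spoiling the limit). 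This is exactly the statement that $\lambda_K$ is a Pisot number, as the footnote on multinacci sequences indicates. Granting dominance, $\log_2 a_K(n)/n \to \log_2 \lambda_K$, and combining with the box-dimension formula yields $\dim_H C_K = \log_2 \lambda_K$.

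\textbf{The main obstacle} is establishing the dominance of $\lambda_K$ over the other roots of $P_K$, since without it the limit of $\log_2 a_K(n)/n$ could fail to exist or the $\limsup$ could be governed by a spurious subdominant contribution. I would handle this by rewriting $P_K(t) = 0$ as $t^{K}(t-1) = t^{K+1} - 2t^K + 1$ after multiplying the recurrence polynomial by $(t-1)$ (clearing the geometric sum), or equivalently analyzing $\sum_{j=0}^{K-1} t^{-(j+1)} = 1$; a short argument via Rouché's theorem or a direct estimate shows that exactly one root lies in $(1,2)$ and all remaining roots lie strictly inside the disk of radius $\lambda_K$, confirming $\lambda_K$ is Pisot and dominant. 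Everything else — the IFS description, the open set condition, and the appeal to \cite{F} — is routine.
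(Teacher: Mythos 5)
Your proposal follows the same strategy as the paper's own proof, which is itself only a two-line sketch: declare $C_K$ self-similar, cite Falconer's Theorem 9.3, and observe that the counting sequence $a_K(n)$ satisfies the recurrence \eqref{eq:multinacci}. So the ingredients match. However, there is a technical misstep in how you set up the self-similarity. Falconer's Theorem 9.3 applies to an attractor $F = \bigcup_{i=1}^m S_i(F)$ of a \emph{free} finite family of similarities with the open set condition; your description of $C_K$ --- the two digit maps of ratio $\frac{1}{2}$ subject to a Markov run-length constraint --- is not of this form (a subshift of finite type is not invariant under such a family), so the theorem cannot be invoked verbatim. Your cylinder count gives the upper bound, but the lower bound would need either the graph-directed (Mauldin--Williams) generalization or a mass distribution argument, e.g. via the Parry measure on your transition graph.

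The cleaner fix, and the decomposition that makes the citation of Theorem 9.3 legitimate (and is presumably what the paper intends, since it produces $P_K$ directly), is to pass to run lengths: writing $x = 0.\underbrace{1\cdots 1}_{j}\,\omega$ with $j \le K$ and $y = 0.\hat{\omega}$, one checks $x = 1 - 2^{-j}y$, so
$$C_K = \bigcup_{j=1}^K f_j(C_K), \qquad f_j(y) := 1 - 2^{-j}y,$$
an honest IFS with contraction ratios $2^{-1}, \dots, 2^{-K}$ satisfying the OSC on $(\frac{1}{2},1)$, since the images $f_j\left(\left(\frac{1}{2},1\right)\right) = \left(1-2^{-j}, 1-2^{-j-1}\right)$ are pairwise disjoint subintervals. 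The Moran equation $\sum_{j=1}^K 2^{-js} = 1$ becomes, with $t = 2^s$, exactly $P_K(t)=0$, and strict monotonicity of $s \mapsto \sum_{j=1}^K 2^{-js}$ gives existence and uniqueness of the positive root for free. Note that this route also makes your ``main obstacle'' evaporate: no Pisot/root-dominance analysis is needed, because one never passes through the asymptotics of $a_K(n)$. (If you prefer to keep your counting route, the dominance of $\lambda_K$ follows most cleanly from Perron--Frobenius applied to the companion matrix of the recurrence, which is non-negative and primitive, rather than from Rouch\'e.)
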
 

As a consequence, a simple estimate on the unique positive root of $P_K$ yields 
$$ \dim_H \Lambda = \lim_{K\to +\infty} dim_H C_K =1$$


\section{Kneading sequences, complex dynamics and univoque numbers} \label{last}

The goal of this section is to explore the relations between the bifurcation sets we described before
and other well-known sets for which a combinatorial description can be given, namely the real slice of the 
Mandelbrot set and the set of univoque numbers.

\subsection{Relation to the Mandelbrot set} \label{mandel}

Let us recall that the Mandelbrot set encodes the dynamical properties of the family of quadratic polynomials
$$p_c(z) := z^2 + c \qquad c \in \mathbb{C}$$
namely it can be defined as 
$$\mathcal{M} := \{ c \in \mathbb{C} \mid  p_c^n(0) \textup{ does not tend to } \infty \}$$
A combinatorial model of the Mandelbrot set can be 
constructed by using the theory of invariant quadratic laminations, as developed by Thurston \cite{Th}.

Consider the unit disk in the complex plane with the action of the doubling map 
$$f(z) = z^2$$
on the boundary. A \emph{leaf} $\mathcal{L}$ is a simple curve embedded in the interior of the circle joining two points on the boundary.
It is usually represented as a geodesic for the hyperbolic metric in the Poincar\'e disk model. 
One can extend the action of $f$ to the whole lamination: the image of a leaf $\mathcal{L}$ is, by definition,
the leaf which connects the images of the endpoints of $\mathcal{L}$. 
We define the \emph{length} $\ell(\mathcal{L})$ of a leaf to be the (euclidean) length of the shortest arc of circle delimited 
by its endpoints, measured on the boundary circle and normalized in such a way that the whole circumference has length $1$ 
(hence, for any leaf $\mathcal{L}$, $0 \leq \ell(\mathcal{L}) \leq \frac{1}{2}$).
A leaf is \emph{real} if it is invariant with respect to complex conjugation. The doubling map $f$ preserves the set of real leaves.

A \emph{lamination} is a closed subset of the disk which is a union of leaves, and 
such that any two leaves in the lamination can intersect only on the boundary of the disk.
A \emph{quadratic invariant lamination} is a lamination whose set of leaves is 
completely invariant with respect to the action of the doubling map (we refer to \cite{Th}, page 66 for a complete definition
of this invariance). 
Given an invariant quadratic lamination, its longest leaves are called its \emph{major leaves}, 
and their image the \emph{minor leaf}. There can be at most $2$ major leaves, and they both have 
the same image. In the well-known theory of Douady, Hubbard and Thurston, the leaves of an invariant quadratic lamination 
define an equivalence relation on the boundary circle and the quotient space is a model for the Julia set of a certain quadratic polynomial. 

The \emph{quadratic minor lamination} $QML$ is the set of all minor leaves of any quadratic invariant lamination
$$QML := \{ \mathcal{L} \mid \exists \textup{ an invariant quadratic lamination }L_0 \textup{ s.t. }\mathcal{L}\textup{ is its minor leaf} \}$$

By (\cite{Th}, Theorem II.6.8), different minor leaves do not cross, and $QML$ is indeed a lamination.
Similarly to the Julia set case, the space obtained by identifying points on the disk which belong to the same leaf of $QML$
is a model of the classical Mandelbrot set.  

We will consider the set $RQML \subseteq QML$ of \emph{real leaves} inside the quadratic minor lamination:
under the previous correspondence, they correspond to real points on the boundary of the Mandelbrot set.
Moreover, 
let $\mathcal{R} = RQML \cap S^1$ be the set of endpoints of all leaves
in $RQML$. Since $RMQL$ is symmetric, it is enough to consider the ``upper half'' $\mathcal{R} \cap [0, \frac{1}{2}]$.

\begin{figure}[h]
  \centering
  \subfloat[The invariant quadratic lamination for the parameter $\frac{5}{6} \in \Lambda$. In red the minor leaf joining $(\frac{5}{12}, \frac{7}{12})$, 
in blue the major leaves. The corresponding parameter in $\mathcal{E}$ through the dictionary of Section \ref{main} is $\frac{3-\sqrt{5}}{2}$, the square of the golden mean.]{\label{fig:quadlam}\includegraphics[width=0.45\textwidth]{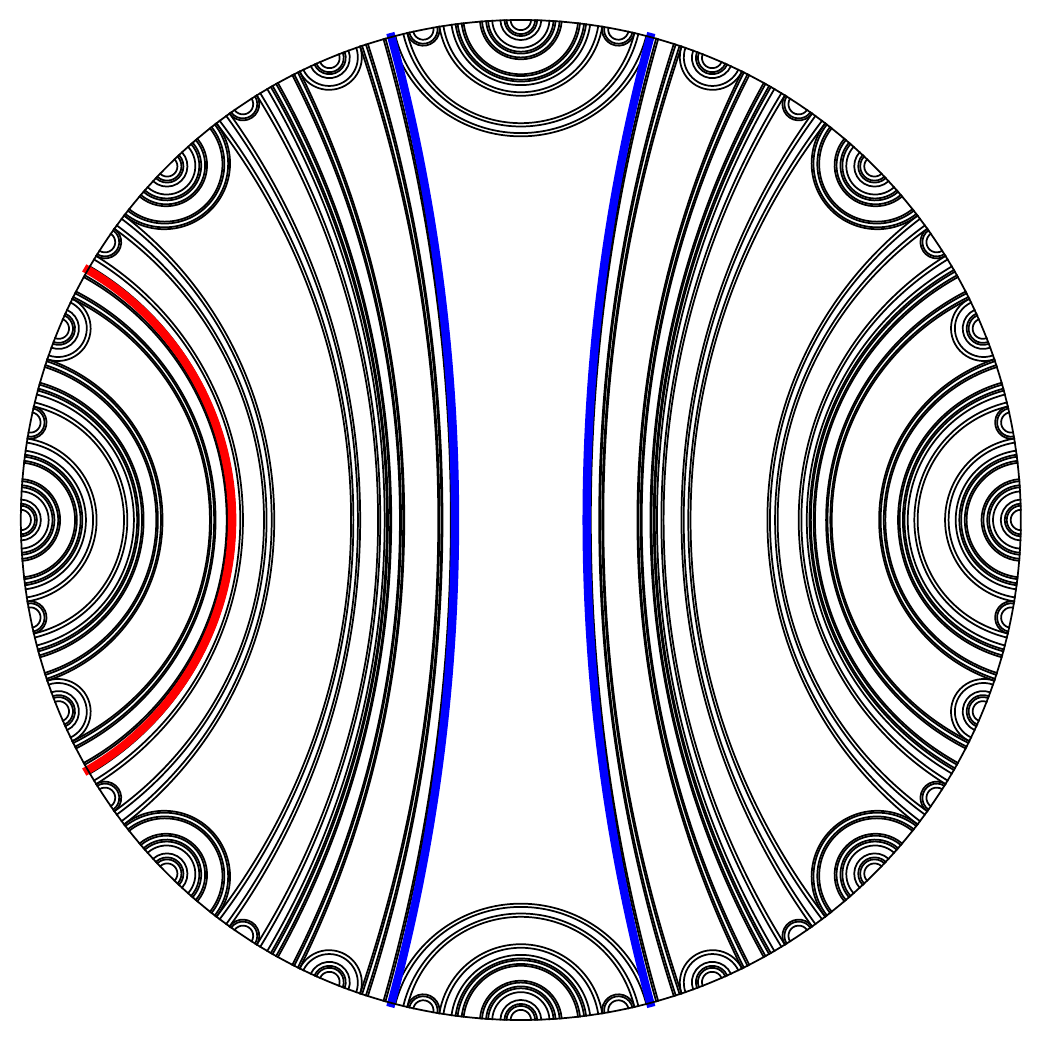}}
  \qquad
  \subfloat[The quadratic minor lamination $QML$ is the collection of all minor leaves. Real leaves are drawn in red, 
and the set $\mathcal{R}$ is the set of all endpoints of red leaves. 
A thicker red line represents the minor leaf corresponding to the lamination to the left. ]{\label{fig:QML}\includegraphics[width=0.45\textwidth]{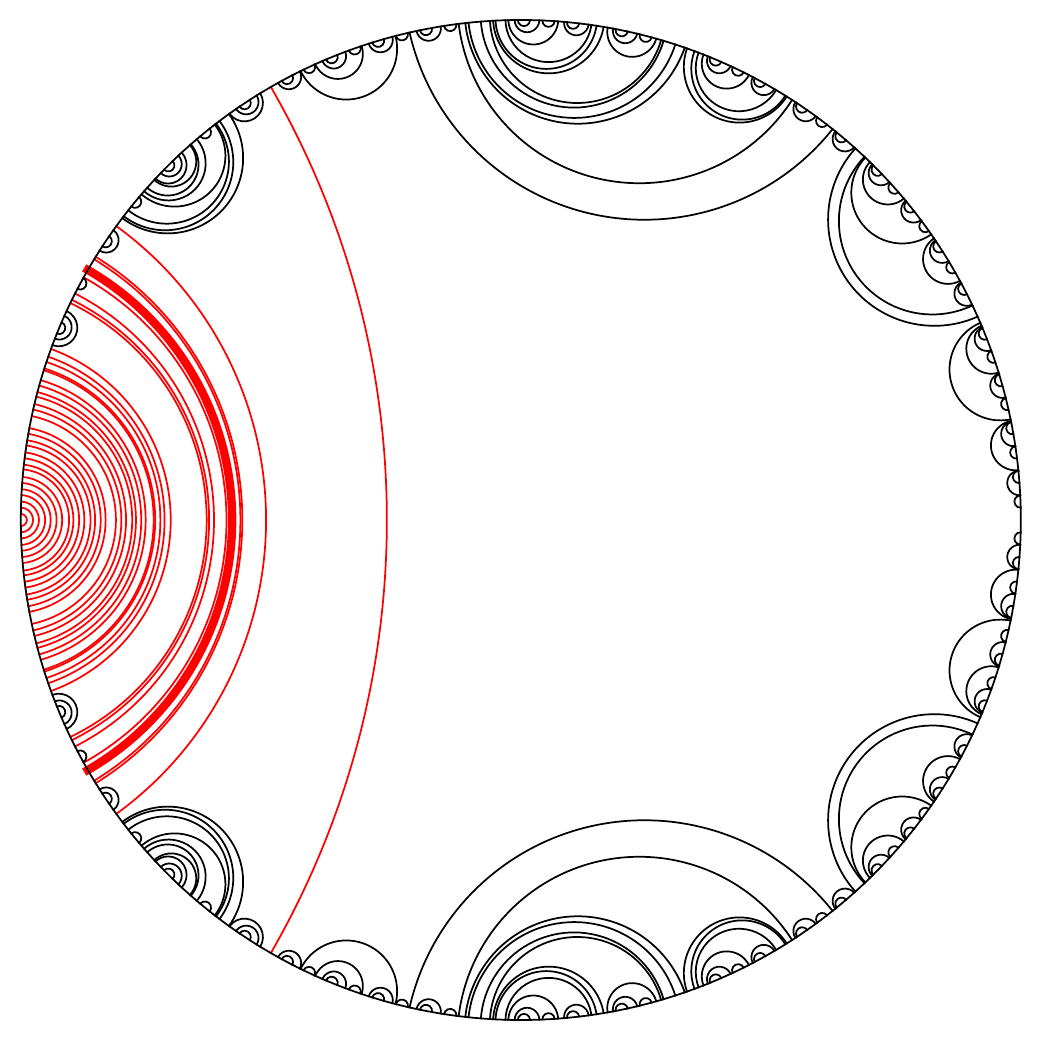}}
 \end{figure}



 
\begin{proposition}\label{simple}
The map $x \mapsto 2x$ maps $\mathcal{R} \cap [0, \frac{1}{2}]$ bijectively onto $\Lambda$. Hence
$$\mathcal{R} = \left( \frac{1}{2}\Lambda \right) \cup \left( 1 - \frac{1}{2}\Lambda \right)$$
\end{proposition}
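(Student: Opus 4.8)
The plan is to parametrize real leaves by a single angle, and then to translate Thurston's characterization of minor leaves into the defining condition of $\Lambda$ via the folding semiconjugacy between the doubling map and the tent map.

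First I would record the elementary geometry of real leaves. A leaf is real (invariant under complex conjugation) exactly when it is symmetric under the angle involution $\theta\mapsto 1-\theta$, so every real leaf has endpoints $\{\theta,1-\theta\}$ for a unique $\theta\in[0,\tfrac12]$; since the doubling map, which in the angle coordinate is $D\theta:=2\theta \bmod 1$, commutes with $\theta\mapsto -\theta$, it sends such a leaf to the real leaf of parameter $\beta:=\min(D\theta,1-D\theta)$, in agreement with the text's remark that $f$ preserves real leaves. Hence $\mathcal{R}\cap[0,\tfrac12]$ is precisely the set of $\theta$ for which $\{\theta,1-\theta\}\in RQML$. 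The bridge to $\Lambda$ is the folding map $\pi(\theta):=2\min(\theta,1-\theta)$, which coincides with the tent map $T$ and satisfies the semiconjugacy $\pi\circ D=T\circ\pi$ (an immediate check using the symmetry $T(u)=T(1-u)$). Since $\pi$ collapses the symmetric leaf of parameter $\theta$ to the single point $2\theta$ and $\pi(D^k\theta)=T^k(2\theta)$, the statement $2\theta\in\Lambda$, i.e. $T^k(2\theta)\le 2\theta$ for all $k$, becomes the purely circular condition that the forward orbit $\{D^k\theta\}_{k\ge0}$ never enters the open arc $(\theta,1-\theta)$ through $\tfrac12$.

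Next I would invoke Thurston's minor leaf criterion from \cite{Th} and specialize it to symmetric leaves. Two distinct real leaves are always nested and never cross, so the non-crossing requirement is automatic for the forward orbit of $m=\{\theta,1-\theta\}$; what remains is the length part, namely that $m$ be no longer than any forward image and that $\ell(m)\le\tfrac13$, so that $m$ is genuinely the image of a major leaf of length $\ge\tfrac13$. Writing $\ell(f^k m)=\min(2\beta_k,1-2\beta_k)$ with $\beta_k:=\min(D^k\theta,1-D^k\theta)$, these length conditions are equivalent to $\tfrac12-\theta\le\beta_k\le\theta$ for all $k$. The orbit/arc condition already supplies the upper bound $\beta_k\le\theta$; I would then show it forces the lower bound $\beta_k\ge\tfrac12-\theta$ as well, by taking a minimal hypothetical index with $D^m\theta<\tfrac12-\theta$ and chasing its two doubling preimages to contradict either minimality or the arc condition. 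Thus, on symmetric leaves, Thurston's criterion collapses exactly to the arc condition, giving $\{\theta,1-\theta\}\in RQML \iff 2\theta\in\Lambda$, i.e. $\mathcal{R}\cap[0,\tfrac12]=\tfrac12\Lambda$.

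From this the proposition follows formally. Scaling by $2$ is injective and $\tfrac12\Lambda\subseteq[0,\tfrac12]$, so $x\mapsto 2x$ is a bijection of $\mathcal{R}\cap[0,\tfrac12]$ onto $\Lambda$; the degenerate cases $\theta=0$ and $\theta=\tfrac12$ match $0,1\in\Lambda$ and the point leaves at the fixed angle $0$ (major leaf trivial) and at the tip angle $\tfrac12$ (major leaf the diameter $\{\tfrac14,\tfrac34\}$ of length $\tfrac12$). Symmetry of $\mathcal{R}$ under $\theta\mapsto 1-\theta$ gives $\mathcal{R}\cap[\tfrac12,1]=1-\tfrac12\Lambda$, whence $\mathcal{R}=\bigl(\tfrac12\Lambda\bigr)\cup\bigl(1-\tfrac12\Lambda\bigr)$. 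I expect the main obstacle to be the second step: faithfully reducing Thurston's criterion to the one-sided orbit condition. The genuinely nontrivial point is that avoiding the arc $(\theta,1-\theta)$ on the upper side automatically yields the matching lower length bound $\beta_k\ge\tfrac12-\theta$, which is exactly what certifies $m$ as the shortest leaf in its orbit and hence a bona fide minor leaf; everything else is the routine folding computation and the symmetry bookkeeping above.
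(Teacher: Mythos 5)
There is a genuine gap in your forward direction ($\{\theta,1-\theta\}\in RQML \Rightarrow 2\theta\in\Lambda$), and it sits exactly where you declare that Thurston's criterion ``collapses to the length part.'' Two things go wrong at once in the regime $\theta<\frac{1}{4}$. First, your translation of condition (ii) into $\frac{1}{2}-\theta\le\beta_k\le\theta$ silently assumes $\ell(m)=1-2\theta$; since $\ell(m)=\min(2\theta,1-2\theta)$, for $\theta<\frac{1}{4}$ the condition actually reads $\theta\le\beta_k\le\frac{1}{2}-\theta$. Second, condition (iii) is \emph{not} automatic for real leaves: the two preimage leaves of $m$ of length at least $\frac{1}{3}$ are the real ones $\{\theta/2,\,1-\theta/2\}$ and $\{\frac{1}{2}-\theta/2,\,\frac{1}{2}+\theta/2\}$ only when $\theta\ge\frac{1}{3}$; when $0<\theta\le\frac{1}{6}$ they are the \emph{non-real} chords $\{\theta/2,\,\frac{1}{2}-\theta/2\}$ and $\{\frac{1}{2}+\theta/2,\,1-\theta/2\}$, which in fact cross $m$. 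Concretely, take $\theta=\frac{1}{6}$: the real leaf $m=\{\frac{1}{6},\frac{5}{6}\}$ has $\ell(m)=\frac{1}{3}$ and forward orbit $m\mapsto\{\frac{1}{3},\frac{2}{3}\}\mapsto\{\frac{1}{3},\frac{2}{3}\}$ with all lengths equal to $\frac{1}{3}$, so it satisfies every condition you retained (nested forward orbit, no shorter image, $\ell(m)\le\frac{1}{3}$); yet $2\theta=\frac{1}{3}\notin\Lambda$ (indeed $T(\frac{1}{3})=\frac{2}{3}>\frac{1}{3}$), and $m\notin QML$ precisely because it crosses its long preimage $\{\frac{1}{12},\frac{5}{12}\}$, i.e.\ because of (iii). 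So (i) and (ii) alone do not imply the arc condition, and your chain of equivalences breaks for $0<\theta\le\frac{1}{6}$.

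The gap is fixable, but it needs genuine input in the range $0<\theta<\frac{1}{3}$: either verify that (iii) always fails there for non-degenerate real leaves (the crossing computation above), or argue as the paper does, bypassing Thurston's criterion in this direction entirely. The paper's forward argument uses the \emph{major} leaf $M$: being the longest leaf of an invariant lamination, it satisfies $\ell(M)\ge\ell(f^k(M))$, hence $2\ell(M)\ge T^k(2\ell(M))$ for all $k$, so $2\ell(M)\in\Lambda$ and therefore $\ell(M)\ge\frac{1}{3}$ by $\Lambda\setminus\{0\}\subseteq[\frac{2}{3},1]$; since $M$ and $m$ lie in the same lamination they cannot cross, which rules out the non-real preimages as majors and forces $\ell(M)=\theta$, giving $2\theta\in\Lambda$ directly. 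Your converse direction, on the other hand, is essentially sound once you record that the arc condition itself forces $\theta\ge\frac{1}{3}$ (so the long preimages are real and (iii) really is automatic in that regime); and your preimage-chasing proof that the one-sided bound $\beta_k\le\theta$ forces the lower bound $\beta_k\ge\frac{1}{2}-\theta$ is correct --- indeed it supplies a detail the paper leaves implicit when it asserts that Thurston's condition (ii) ``follows from'' the maximality inequality.
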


Proposition \ref{simple}, together with the fact that $dim_H\Lambda=1$, yields a new proof of the main result in \cite{Za}

\begin{corollary}
The Hausdorff dimension of $\mathcal{R}$ is $1$.
\end{corollary}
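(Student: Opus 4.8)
The plan is to derive the corollary directly from Proposition \ref{simple} together with the already-established fact $\dim_H \Lambda = 1$ (Section \ref{HD}), using only the elementary behavior of Hausdorff dimension under affine maps and finite unions. By Proposition \ref{simple} we have the exact description
$$\mathcal{R} = \left( \tfrac{1}{2}\Lambda \right) \cup \left( 1 - \tfrac{1}{2}\Lambda \right),$$
so it suffices to compute the Hausdorff dimension of each of the two pieces and then take their maximum.

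First I would observe that each piece is the image of $\Lambda$ under an affine map of the line. The map $x \mapsto \tfrac{1}{2}x$ is a similarity with ratio $\tfrac{1}{2}$, and $x \mapsto 1 - \tfrac{1}{2}x$ is a similarity with ratio $\tfrac{1}{2}$ composed with a reflection; both are bi-Lipschitz homeomorphisms of $[0,1]$ onto their images, with nonzero derivative everywhere. Since Hausdorff dimension is invariant under bi-Lipschitz maps (indeed under any similarity), this gives
$$\dim_H\!\left( \tfrac{1}{2}\Lambda \right) = \dim_H\!\left( 1 - \tfrac{1}{2}\Lambda \right) = \dim_H \Lambda = 1,$$
where the last equality is precisely the conclusion $\dim_H \Lambda = 1$ proved in Section \ref{HD} via the self-similar sets $C_K$ and Proposition \ref{pisot}.

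Finally I would invoke the countable (here finite) stability of Hausdorff dimension, namely $\dim_H(A \cup B) = \max\{\dim_H A, \dim_H B\}$, to conclude
$$\dim_H \mathcal{R} = \max\left\{ \dim_H\!\left( \tfrac{1}{2}\Lambda \right),\, \dim_H\!\left( 1 - \tfrac{1}{2}\Lambda \right) \right\} = 1.$$
For the reverse inequality one need only note that $\mathcal{R} \subseteq [0,1]$, so its dimension cannot exceed $1$; this is automatic and requires no separate argument. There is essentially no serious obstacle here: the entire content of the corollary is transporting the dimension computation already carried out for $\Lambda$ across the explicit affine identification furnished by Proposition \ref{simple}, and the only point deserving a word is that both defining maps are similarities (hence dimension-preserving), which is immediate from their linear form.
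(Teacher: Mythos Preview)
Your proof is correct and follows exactly the paper's approach: the corollary is stated as an immediate consequence of Proposition \ref{simple} together with $\dim_H \Lambda = 1$, and you have simply spelled out the standard facts (invariance of Hausdorff dimension under similarities and stability under finite unions) that make this implication work.
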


Because of the fact that the set of rays which possibly do not land has zero capacity and by Makarov's dimension theorem 
\cite{Ma}, we proved 

\begin{corollary}  \label{mandeldim}
The intersection of the boundary of the Mandelbrot set with the real line has Hausdorff dimension $1$ . 
\end{corollary}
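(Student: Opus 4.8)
The plan is to deduce Corollary \ref{mandeldim} from Corollary \ref{mandeldim}'s predecessor, namely the fact (just established) that $\dim_H \mathcal{R} = 1$. The set $\mathcal{R}$ consists of the angles of external rays landing on the real slice of $\partial \mathcal{M}$, so the task is to transfer this dimension statement from the \emph{space of external angles} (a subset of the boundary circle $S^1 = \R/\Z$) to the actual \emph{landing points} on $\partial \mathcal{M} \cap \R$. The natural tool is the Riemann map $\Phi: \hat{\C} \setminus \mathcal{M} \to \hat{\C} \setminus \overline{\mathbb{D}}$ uniformizing the complement of the Mandelbrot set, whose inverse $\Phi^{-1}$ carries external rays (radial lines) to the dynamical rays, and whose boundary behaviour relates the harmonic measure on $\partial \mathcal{M}$ to Lebesgue measure on $S^1$.

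First I would recall the Douady–Hubbard theory of external rays: each angle $\theta \in \R/\Z$ for which the ray $R_\theta$ lands determines a prime end, and hence a well-defined landing point in $\partial \mathcal{M}$. The subtlety flagged in the paper is that not \emph{every} ray is known to land — but the set of non-landing angles is controlled: it has zero logarithmic capacity, and therefore zero harmonic measure and (more to the point here) cannot carry any positive-dimensional content that would obstruct the transfer. So up to a capacity-zero set, $\mathcal{R}$ is precisely the set of angles landing on the real axis, and the landing map $\theta \mapsto (\text{landing point of } R_\theta)$ is defined on $\mathcal{R}$ off a negligible set.

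Next I would invoke Makarov's theorem, cited as \cite{Ma}: harmonic measure on the boundary of a simply connected domain has dimension exactly $1$, and more usefully the boundary extension of the Riemann map is (by the theory of conformal maps and the Beurling-type estimates underlying Makarov's result) sufficiently regular that it does not decrease Hausdorff dimension on the relevant sets. The key quantitative input is that $\Phi^{-1}$ restricted to a full-dimensional subset of $S^1$ has image of full dimension in $\partial \mathcal{M}$; this is where Makarov's dimension theorem, together with the zero-capacity control of the non-landing rays, does the real work. Concretely, since $\mathcal{R}$ has $\dim_H = 1$ and the exceptional (non-landing) rays form a set of zero capacity — hence of Hausdorff dimension bounded well below $1$, in fact dimension zero — the full-dimensional part of $\mathcal{R}$ lands, and the landing map pushes this forward to a subset of $\partial\mathcal{M}\cap\R$ of dimension $1$. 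Since $\partial \mathcal{M} \cap \R$ sits inside the line, its dimension is at most $1$, giving equality.

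The hard part will be the dimension-transfer step: justifying that passing from angles to landing points neither drops nor inflates Hausdorff dimension below $1$. This is not a soft measure-theoretic statement — it requires the combination of (i) Makarov's theorem to pin harmonic measure to dimension $1$, and (ii) the capacity estimate to discard the rays that may fail to land, so that the landing correspondence is essentially defined and dimension-preserving on a set of full dimension. Everything else — the definition of $\mathcal{R}$, the identification of real landing points with $\partial \mathcal{M} \cap \R$, and the trivial upper bound $\dim_H(\partial\mathcal{M}\cap\R)\le 1$ — is routine once this conformal-analytic core is in place.
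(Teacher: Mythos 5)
Your proposal is correct and follows essentially the same route as the paper, whose proof is just a one-sentence invocation of the same two ingredients: the set of possibly non-landing rays has zero (logarithmic) capacity, and Makarov's dimension theorem \cite{Ma} prevents the conformal landing correspondence from compressing the dimension-$1$ set $\mathcal{R}$ to a set of dimension below $1$. Your write-up simply makes explicit the details the paper leaves implicit (zero capacity $\Rightarrow$ dimension zero, and the trivial upper bound $\dim_H(\partial\mathcal{M}\cap\R)\le 1$).
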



\textit{Proof of Proposition \ref{simple}.}
The first ingredient is the fact that the action induced by the doubling map on the lengths of leaves is 
essentially given by the tent map, namely for any leaf $\mathcal{L}$
\begin{equation}
\label{eq:tent}
2 \ell(f(\mathcal{L})) = T(2 \ell(\mathcal{L}))
\end{equation}

Let $x \in \mathcal{R} \cap [0, \frac{1}{2}]$ be the endpoint of a minor leaf, let $m$ be the 
minor leaf and $M$ be one of the corresponding major leaves. By maximality of $M$ and invariance of the 
quadratic lamination, $\ell(M) \geq \ell(f^k(M))$ for all $k \in \mathbb{N}$, hence by \eqref{eq:tent}
\begin{equation} \label{eq:max}
2 \ell(M) \geq T^k (2 \ell(M)) \quad \forall k \in \mathbb{N}
\end{equation}
From this and the fact that $x = 2/3$ is a fixed point for $T$ 
it follows that 
the major leaf is longer than $\frac{1}{3}$, hence
$$\ell(m) = \frac{1}{2}T(2\ell(M)) = 1 - 2\ell(M)$$
Moreover, by symmetry of the minor leaf with respect to complex conjugation, 
$$\ell(m) = 1 - 2x$$
hence $x = \ell(M)$ and by \eqref{eq:max}
$$2x \geq T^k(2x) \quad \forall k \in \mathbb{N}$$
hence $2x$ belongs to $\Lambda$. Conversely, if $y = 2x \in\Lambda$, one can construct the real leaf $m$ 
which joins $x$ with $1-x$. The fact that $m$ is a minor leaf follows from the criterion (\cite{Th}, page 91):
a leaf $m$ is the minor leaf of an invariant quadratic lamination if and only if 
\begin{enumerate}
\item[(i)] all forward images have disjoint interior;
\item[(ii)] no forward image is shorter than $m$;
\item[(iii)] if $m$ is non-degenerate, then $m$ and all leaves on the forward orbit 
can intersect the (at most 2) preimage leaves of $m$ of length at least $\frac{1}{3}$
only on the boundary.
\end{enumerate}

Indeed, (i) follows since images of real leaves are real; (ii) follows from \eqref{eq:max}, 
and (iii) follows from the fact that the preimages of a real leaf of length less than $\frac{1}{3}$ are also real. 
$\qed$

\subsection{An alternative characterization and univoque numbers} \label{univ}

\begin{definition}
A real number $q \in (1, 2)$ is called \emph{univoque} if $1$ admits a
{\sl unique} expansion in base $q$, i.e.  if there exists a
\emph{unique} sequence $\{c_k\} \in \{0, 1\}^{\mathbb{N}}$ such that
\begin{equation}\label{eq:duality} 
\sum_{k\geq 1} c_k\, q^{-k} = 1
\end{equation}
The set $\mathcal
   U$ of all univoque numbers is called the {\sl univoque set}.
\end{definition}

In the 90's, 
\cite{EHJ} discovered that
there are infinitely many univoque numbers. In fact \cite{EJK} showed
that, given any binary sequence $(c_k)_{k=1}^\infty$, then equation
\eqref{eq:duality} defines a univoque number if and only if the
following condition is met 
\begin{equation}\label{eq:admissible}
\left\{
\begin{array}{ll}
(c_{k+1},c_{k+2},c_{k+3},...)< (c_1,c_2,c_3,...) & \mbox{ if }c_k=0\\
 (c_{k+1},c_{k+2},c_{k+3},...)>(\hat c_1,\hat c_2,\hat c_3,...)& \mbox{ if }c_k=1
\end{array}
\right. \ \ \ \ \ \  \forall k\geq 1
\end{equation}
where the minor and major signs refer to the lexicographical order and $\hat c=1-c$ as usual.

Binary sequences which satisfy equation \eqref{eq:admissible} are
called {\em admissible}.  It is not difficult to establish (by means of the so called {\sl greedy} algorithm)  that equation
  \eqref{eq:duality} defines an order preserving bijection between
  $\mathcal{U}$ and the set of all admissible sequences.

   The properties of $\mathcal U$ were studied by
   various authors, leading in particular to the determination (in
   \cite{KL1}) of the smallest element, $q_1=1.78723...$, which is not
   isolated in $\mathcal U$ and whose corresponding admissible sequence is the shifted Thue-Morse sequence $(t_n)_{n\geq 1}$. The geometric
   structure of $\mathcal U$ is specified by the following properties:
\begin{itemize}
\item the set $\mathcal U$ has $2^\omega$ elements;
\item the set $\mathcal U$ has zero Lebesgue measure;
\item the set $\mathcal U$ is of first category;
\item the set $\mathcal U$ has Hausdorff dimension $1$.
\end{itemize}
For an account of 
these and other properties of the set $\mathcal U$ see the recent paper \cite{KL2} and references therein.

In a series of papers (see  \cite{A}, \cite{AC1}, \cite{AC2}), J.-P. Allouche and M. Cosnard introduced the number set
\be
\Gamma :=  \{ x\in [0,1]\, : \, 1-x \leq \{2^k x\} \leq x, \; \forall k\in \N\}
\ee
and recognized (\cite{AC2}, Proposition 1) admissible binary sequences to be in a one-to-one correspondence with the elements of $\Gamma$
 which have non-periodic expansions. It is not difficult to realize
 that  the set $\Gamma$ essentially coincides with $\Lambda$, namely  

\begin{lemma}\label{GL}
\be
\Lambda\setminus\{0\} = \Gamma
\ee
\end{lemma}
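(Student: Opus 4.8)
The plan is to show that the two defining inequalities coincide once we unwind the definition of the fractional part $\{2^k x\}$ in terms of the tent map $T$. Recall from \eqref{tenda} that $T$ acts on binary expansions by chopping off the leading digit, complementing the tail when that digit is a $1$. Concretely, if $x = 0.b_1b_2b_3\ldots$ then the shift $\sigma^k$ acting on the binary expansion corresponds exactly to the operation $x \mapsto \{2^k x\}$, since multiplying by $2^k$ and discarding the integer part deletes the first $k$ binary digits. Thus $\{2^k x\}$ is the binary number $0.b_{k+1}b_{k+2}\ldots$, while $T^k(x)$ involves the additional complementation bookkeeping encoded by the parity of $\sum_{i=1}^k b_i$. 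So the first thing I would establish is the precise relationship between the two: for each $k$ one has either $T^k(x) = \{2^k x\}$ or $T^k(x) = 1 - \{2^k x\}$, according to whether the number of $1$'s among $b_1,\ldots,b_k$ is even or odd.

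With that dictionary in hand, the equivalence becomes a purely combinatorial comparison. First I would recall that $\Lambda \setminus \{0\} \subseteq [2/3, 1]$, as noted in the excerpt right after the definition of $\Lambda$; in particular every nonzero $x \in \Lambda$ satisfies $x \geq 1/2$, and one checks directly that $\Gamma \subseteq [1/2, 1]$ as well (since the defining chain $1 - x \leq \{2^k x\} \leq x$ forces $1 - x \leq x$ by taking $k = 0$, i.e. $x \geq 1/2$). For the inclusion $\Gamma \subseteq \Lambda \setminus \{0\}$, suppose $x \in \Gamma$. The condition $\{2^k x\} \leq x$ handles the case where the digit-count is even (so $T^k(x) = \{2^k x\} \leq x$), and the condition $1 - x \leq \{2^k x\}$, i.e. $1 - \{2^k x\} \leq x$, handles the odd case (so $T^k(x) = 1 - \{2^k x\} \leq x$). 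Either way $T^k(x) \leq x$ for all $k$, giving $x \in \Lambda$.

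For the reverse inclusion $\Lambda \setminus \{0\} \subseteq \Gamma$, take $x \in \Lambda$ with $x \neq 0$. For each $k$ I would split on the parity of $\sum_{i=1}^k b_i$ exactly as above. When the parity is even, $T^k(x) \leq x$ gives directly $\{2^k x\} = T^k(x) \leq x$; and when it is odd, $T^k(x) \leq x$ gives $1 - \{2^k x\} \leq x$, i.e. $1 - x \leq \{2^k x\}$. The two remaining inequalities (namely $\{2^k x\} \geq 1 - x$ in the even case, and $\{2^k x\} \leq x$ in the odd case) then follow from combining the inequality already obtained at step $k$ with the corresponding inequality at a neighbouring step, using the fact that $T$ carries $\{2^k x\}$ to $\{2^{k+1} x\}$ up to complementation. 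The cleanest route is to verify both inequalities simultaneously for all $k$ by observing that the $\Lambda$-condition $T^k(x) \leq x$ bounds $T^k(x)$ above by $x$, while the trivial bound $T^k(x) \geq 0$ together with the complementation identity supplies the matching lower bound $T^k(x) \geq 1 - x$ whenever needed; since $\{2^k x\}$ equals either $T^k(x)$ or $1 - T^k(x)$, both halves of the $\Gamma$-inequality $1 - x \leq \{2^k x\} \leq x$ drop out.

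I expect the main obstacle to be the careful parity bookkeeping: one must track exactly when $T^k(x)$ equals $\{2^k x\}$ versus $1 - \{2^k x\}$, and make sure the lower bound $1 - x \leq \{2^k x\}$ is genuinely a consequence of the $\Lambda$-condition rather than an independent hypothesis. The subtle point is that the $\Gamma$-definition is a two-sided constraint at every $k$, whereas the $\Lambda$-definition is one-sided; the resolution is that the complementation in the tent map converts the single upper bound $T^k(x) \leq x$ into both a lower and an upper bound on $\{2^k x\}$ depending on the parity, so the two-sided nature of $\Gamma$ is automatically encoded. Verifying that no boundary or dyadic-rational ambiguity in the binary expansion spoils this correspondence is the one place where a little extra care is warranted, and I would handle it by fixing, once and for all, the convention used elsewhere in the paper for expansions of dyadic rationals.
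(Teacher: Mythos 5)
Your first inclusion, $\Gamma\subseteq\Lambda\setminus\{0\}$, is correct and coincides with the paper's argument: $T^k(x)$ always equals one of the two numbers $\{2^kx\}$, $1-\{2^kx\}$, and the $\Gamma$-condition says precisely that both members of this pair are $\leq x$, so $T^k(x)\leq x$ follows.

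The converse inclusion, however, has a genuine gap, and it is exactly where the content of the lemma lies. For $x\in\Lambda\setminus\{0\}$ the hypothesis $T^k(x)\leq x$ controls only the member of the pair $\{\{2^kx\},\,1-\{2^kx\}\}$ that equals $T^k(x)$; the missing half of the $\Gamma$-condition at step $k$ is equivalent to the lower bound $T^k(x)\geq 1-x$, which is \emph{not} a formal consequence of the upper bound at the same step. Your ``cleanest route'' does not establish it: the trivial bound $T^k(x)\geq 0$ cannot produce $T^k(x)\geq 1-x$, since $1-x$ is in general strictly positive (all one knows is $1-x\leq \frac{1}{3}$). Your fallback --- ``combine with the inequality at a neighbouring step'' --- names no actual mechanism, and adjacent steps are not enough. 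What the paper does is the following: if $T^l(x)<\frac{1}{2}$, write $T^l(x)=0.\underbrace{0\cdots0}_{m}1c_1c_2\ldots$, so that the quantity to be bounded is $1-T^l(x)=0.\underbrace{1\cdots1}_{m}0\hat{c}_1\hat{c}_2\ldots$; this is compared with $T^{i}(x)$, where $i<l$ is the \emph{last} index at which $T^{i}(x)\geq\frac{1}{2}$. One computes $T^{i}(x)=0.\underbrace{1\cdots1}_{M}0\hat{c}_1\hat{c}_2\ldots$ with $M=m+(l-i)>m$, whence $1-T^l(x)<T^{i}(x)\leq x$, which is the desired bound. Note that this invokes the $\Lambda$-hypothesis at a time $i$ that can be arbitrarily far in the past (the gap $l-i$ is as long as the run of equal digits preceding position $l$), so no step-by-step or bounded-lookback argument of the kind you sketch can close the gap. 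Incidentally, the parity bookkeeping you single out as the main obstacle is a red herring: the rule you state (complementation governed by the parity of $b_1+\cdots+b_k$) is not even the correct one --- the right criterion is whether $b_k=1$, a slip that also appears verbatim in the paper --- but this is harmless on both sides, since the only fact ever used is that $T^k(x)$ lies in the pair $\{\{2^kx\},1-\{2^kx\}\}$; the real difficulty is the missing lower bound described above.
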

\noindent
\begin{proof}
Let $x = 0.\omega_1 \omega_2 \dots$ be a binary expansion of $x \in [0, 1]$. From (\ref{tenda}) it follows that
$$
T^k(x) = \left\{ \begin{array}{ll} 0.\omega_{k+1} \omega_{k+2} \dots & \textup{if }\sum_{i=1}^k \omega_i=0 \, {\rm (mod\ 2)}\\
				     0.\hat{\omega}_{k+1} \hat{\omega}_{k+2} \dots & \textup{if }\sum_{i=1}^k \omega_i=1 \, {\rm (mod\ 2)}\end{array} \right.
				     $$
Define
$$
S_k(x) := \max\{ \{2^kx\}, 1-\{ 2^kx\} \} = \left\{ \begin{array}{ll} 0.\omega_{k+1} \omega_{k+2} \dots & \textup{if }\omega_{k+1} = 1 \\
                     0.\hat{\omega}_{k+1} \hat{\omega}_{k+2} \dots & \textup{if }\omega_{k+1} = 0 \end{array}\right.
$$
then $\Gamma= \{ x \in [0, 1] : S_k(x) \leq x,  \; \forall k \in \N \}$ and since $T^k(x) \leq S_k(x)$ we have 
$\Gamma \subseteq \Lambda$.

\noindent
Let now $x \in \Lambda \setminus \{0\}$, and let $I = \{i_k\}_{k \in \N} = \{ i \in \N : T^i(x) = 0.1\dots \}$ ($0 \in I$
since $x \neq 0$). Given $l \in \N$, either $l \in I$ or $l \notin I$. If $l \in I$, then $S_l(x) = T^l(x) \leq x$.
If instead $l \notin I$, then there exists a unique $k$ s.t. $i_k < l < i_{k+1}$. Thus
$$T^{i_k}(x) = 0.\underbrace{1  \dots  1}_{i_{k+1} - i_k} 0\dots\, , \qquad T^l(x) = 0.\underbrace{0\dots0}_{i_{k+1}-l}1\dots $$ 
and hence 
$$S_l(x) = 0.\underbrace{1\dots1}_{i_{k+1}-l}0 \ldots < 0.\underbrace{1\dots1}_{i_{k+1}-i_k}0 \ldots = T^{i_k}(x) \leq x$$
\end{proof}

Therefore $\Lambda$ can be given the following arithmetic interpretation (see also \cite{AC2}):

\begin{proposition}\label{1to1} There is a one-to-one correspondence between $\mathcal U$ and $\Lambda\setminus \mathbb{Q}_1$. More specifically, a
number $\tau =\sum_{k\geq 1} c_k \,2^{-k}$ with non-periodic binary expansion belongs to $\Lambda$ if and only if  $1=\sum_{k\geq 1} c_k \, q^{-k}$ for some $1<q<2$ and the expansion is unique.
\end{proposition}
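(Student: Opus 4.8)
The plan is to establish Proposition \ref{1to1} by combining the already-proven identification $\Lambda \setminus \{0\} = \Gamma$ (Lemma \ref{GL}) with the known correspondence between $\mathcal{U}$ and admissible sequences. First I would recall that by \cite{EJK} and the discussion preceding the proposition, equation \eqref{eq:duality} sets up an order-preserving bijection between $\mathcal{U}$ and the set of admissible binary sequences, i.e. those satisfying \eqref{eq:admissible}. Thus it suffices to identify which elements of $\Lambda$ correspond to admissible sequences. Since by \cite{AC2}, Proposition 1, admissible sequences are in one-to-one correspondence with the \emph{non-periodic} elements of $\Gamma$, and since $\Gamma = \Lambda \setminus \{0\}$ by Lemma \ref{GL}, the correspondence reduces to matching up the non-periodic elements of $\Lambda$ with $\mathcal{U}$.

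The key step is therefore to verify that ``non-periodic expansion'' is the correct condition, namely that the periodic elements of $\Lambda$ are precisely those lying in $\mathbb{Q}_1$ (the rationals with odd denominator), so that $\Lambda \setminus \mathbb{Q}_1$ consists exactly of the aperiodic points. This is essentially the content of Proposition \ref{topology}(1): the connected-component endpoints of the complement of $\Lambda$ have purely periodic binary expansion and belong to $\mathbb{Q}_1$, and conversely a point of $\Lambda$ with eventually periodic binary expansion is in fact periodic and rational with odd denominator (by identity \eqref{bina}, a purely periodic binary word of period $s$ represents a fraction with denominator $2^s - 1$, which is odd). I would spell out that a number in $\Lambda$ with periodic binary expansion lies in $\mathbb{Q}_1$, while an aperiodic point of $\Lambda$ cannot be rational, so $\Lambda \setminus \mathbb{Q}_1$ is exactly the aperiodic part of $\Lambda \setminus \{0\}$.

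For the ``more specifically'' clause, I would unwind the definitions: a number $\tau = \sum_{k \geq 1} c_k 2^{-k}$ with non-periodic binary expansion lies in $\Lambda$ iff $(c_k)$ is admissible (via $\Gamma = \Lambda \setminus \{0\}$ and \cite{AC2}), and an admissible sequence is by definition one for which $1 = \sum_{k \geq 1} c_k q^{-k}$ for a \emph{unique} $q \in (1,2)$, i.e. $q \in \mathcal{U}$. Chaining these equivalences gives both the bijection and the explicit arithmetic characterization.

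The main obstacle I anticipate is not in the logical chain itself, which is short once the inputs are granted, but in pinning down precisely the dictionary between the lexicographic admissibility condition \eqref{eq:admissible} and the dynamical condition $S_k(x) \leq x$ defining $\Gamma$: one must check that the two halves of \eqref{eq:admissible} (the cases $c_k = 0$ and $c_k = 1$) correspond exactly to the two branches in the definition of $S_k$, and that the strict versus non-strict inequalities match up correctly on periodic versus aperiodic sequences. This bookkeeping — ensuring that the boundary case where $T^k(\tau) = \tau$ (which signals periodicity and hence a non-unique base-$q$ expansion) is correctly excluded by passing to $\Lambda \setminus \mathbb{Q}_1$ — is where the argument must be handled with care, but it follows from the structural facts already assembled in Lemma \ref{GL} and Proposition \ref{topology}.
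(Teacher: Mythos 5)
Your logical chain (the \cite{EJK} bijection between $\mathcal U$ and admissible sequences, then \cite{AC2} Proposition 1, then Lemma \ref{GL}) is the same one the paper uses — the paper differs only in verifying directly, instead of citing \cite{AC2}, that admissibility \eqref{eq:admissible} is equivalent to the strict two-sided condition $(\hat c_1,\hat c_2,\dots)<(c_{k+1},c_{k+2},\dots)<(c_1,c_2,\dots)$ for all $k\geq 1$. However, your self-declared ``key step'' contains a genuine error: it is \emph{not} true that a point of $\Lambda$ with eventually periodic binary expansion must be purely periodic, and consequently $\Lambda\setminus\mathbb{Q}_1$ is \emph{not} the set of aperiodic points of $\Lambda$. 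Concretely, $x=5/6=0.1\overline{10}$ lies in $\Lambda$ (its orbit under $T$ is $5/6\mapsto 1/3\mapsto 2/3\mapsto 2/3\mapsto\cdots$; the paper itself uses $5/6\in\Lambda$ as an example in Section \ref{mandel}), it is not in $\mathbb{Q}_1$ since its denominator is even, and its expansion is eventually periodic but not purely periodic. Its digit sequence $(1,1,0,1,0,1,\dots)$ satisfies the strict two-sided condition, hence is admissible and corresponds to an actual (algebraic) univoque number — consistent with the paper's later remark that rational elements of $\Lambda$ map to algebraic elements of $\mathcal U$, which are dense in $\mathcal U$ by \cite{dV}. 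Under your identification, all such points would be dropped, so your correspondence would fail to be onto $\mathcal U$ and would not establish the stated bijection with $\Lambda\setminus\mathbb{Q}_1$.

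The repair is to read ``non-periodic'' (here and in \cite{AC2}) as \emph{not purely periodic}, rather than ``not eventually periodic.'' Since a number in $[0,1]$ has a purely periodic binary expansion if and only if it is rational with odd denominator — which is exactly what your appeal to \eqref{bina} shows — one gets directly that $\Lambda\setminus\mathbb{Q}_1$ is the set of elements of $\Gamma$ admitting no purely periodic expansion; no claim about eventually periodic points is needed, and none is available, since such points genuinely occur in $\Lambda\setminus\mathbb{Q}_1$. This also explains why $\mathbb{Q}_1$ is precisely what must be removed: a purely periodic sequence of period $p$ satisfies $(c_{p+1},c_{p+2},\dots)=(c_1,c_2,\dots)$, violating the strict upper inequality at $k=p$, so it can never be admissible. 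A smaller inaccuracy: admissibility is not ``by definition'' the existence of a unique $q\in(1,2)$ with $1=\sum_{k\geq 1}c_k q^{-k}$; for any nonzero binary sequence the solution $q$ of that equation is automatically unique, by monotonicity of $q\mapsto\sum_{k\geq 1}c_k q^{-k}$. What \cite{EJK} proves is that $(c_k)$ is admissible if and only if it is the \emph{unique expansion} of $1$ in that base $q$, i.e.\ if and only if $q\in\mathcal U$.
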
 

\begin{proof}
It is an easy consequence of condition \eqref{eq:admissible} that a
sequence $(c_k)$ is admissible iff 
$$
(\hat c_1,\hat c_2,\hat c_3,...)<(c_{k+1},c_{k+2},c_{k+3},...)< (c_1,c_2,c_3,...) \qquad \forall k \geq 1
$$
In other words, $(c_k)$ is admissible iff the real value $\tau =\sum_{k\geq 1} c_k \,2^{-k}$
is a nonperiodic element of $\Gamma$; thus our claim 
follows from Lemma \ref{GL}.
\end{proof}

\noindent
It has been shown in \cite{AC3} that $q_1 \in \mathcal U$ is transcendental. The argument used there can be adapted to show
that the image in $\mathcal U$ of all the numbers $\tau_\infty (\eta)$ dealt with in Proposition \ref{xi} are transcendental as well. 

\begin{proposition}\label{trascendent}
Let $\tau_\infty(\eta) \in \Lambda$ be an accumulation point of a period doubling cascade, and let $q \in [1, 2]$ 
be the corresponding univoque number. Then $q$ is transcendental.
\end{proposition}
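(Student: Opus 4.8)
The plan is to adapt the Allouche--Cosnard transcendence argument (used for $q_1$) by passing through the generating function of the generalized Thue--Morse word that defines $\tau_\infty(\eta)$. First, by Proposition \ref{1to1} the univoque number $q$ is characterized by $\sum_{k\ge 1} c_k q^{-k}=1$, where $(c_k)_{k\ge 1}$ is the binary expansion of $\tau_\infty(\eta)$; since $\tau_\infty(\eta)$ is transcendental (Proposition \ref{xi}) it is irrational, hence its expansion is aperiodic and $q\in(1,2)$ is genuinely univoque. Writing $x:=1/q\in(\tfrac12,1)$, it suffices to prove $x$ transcendental. I would introduce $g(z):=\sum_{k\ge 1}c_k z^k$, so that $g(x)=1$ while $g(\tfrac12)=\tau_\infty(\eta)$, and reduce the whole problem to understanding $g$.

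The core is a closed form for $g$. The word $(c_k)$ is the fixed point of the substitution $\Delta(\xi)=\xi\,1\,\hat\xi$ seeded by $\eta$, i.e. the limit of $W_j:=\Delta^j(\eta)$, with $W_{j+1}=W_j\,1\,\hat W_j$. Let $P_j(z)$ be the generating polynomial of $W_j$ (integer coefficients) and set $u_j:=z^{2^j(p+1)}$, so that $u_{j+1}=u_j^2$ and $u_0=z^{p+1}$. Splitting $W_{j+1}$ into its three blocks and using $\sum_{k=1}^{L_j} z^k=\tfrac{z-u_j}{1-z}$ together with $\hat c=1-c$ yields the linear recursion $P_{j+1}=(1-u_j)P_j+\tfrac{u_j(1-u_j)}{1-z}$. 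Solving it by variation of parameters against $\Pi_j:=\prod_{l=0}^{j-1}(1-u_l)$ and telescoping $\sum_i u_i/\Pi_i$ gives $P_j=\Pi_j P_0+\tfrac{1}{2(1-z)}\big(1-u_j-\Pi_j(1-u_0)\big)$; letting $j\to\infty$ (so $u_j\to 0$ and $\Pi_j\to\Xi(z^{p+1})$ for $|z|<1$) produces
\begin{equation*}
g(z)=\frac{1}{2(1-z)}+\Big(P_0(z)-\frac{1-z^{p+1}}{2(1-z)}\Big)\,\Xi\!\left(z^{p+1}\right),\qquad |z|<1.
\end{equation*}
As a consistency check, evaluating at $z=\tfrac12$ and using $0.\eta+2^{-(p+1)}=d_0(\eta)$ recovers exactly the identity of Proposition \ref{xi}; this is what convinces me the telescoping has been done correctly.

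I would then substitute $z=x$, use $g(x)=1$, and solve for
\begin{equation*}
\Xi\!\left(x^{p+1}\right)=\frac{1-2x}{2(1-x)\,P_0(x)-1+x^{p+1}},
\end{equation*}
a rational function of $x$ with integer coefficients (the denominator cannot vanish, since $\Xi(x^{p+1})$ is a finite nonzero value for $0<x^{p+1}<1$ while $1-2x<0$). If $q$, and hence $x$ and $x^{p+1}\in(0,1)$, were algebraic, the right-hand side would be algebraic, contradicting Mahler's theorem that $\Xi(a)$ is transcendental for every algebraic $a$ with $0<|a|<1$ (already invoked in Proposition \ref{xi}). Therefore $q$ is transcendental.

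The main obstacle is the middle step: deriving the recursion from the substitution $\Delta$ and solving it so as to isolate the factor $\Xi(z^{p+1})$. The telescoping of $\sum_i u_i/\Pi_i$ is the delicate point and must be verified against the known base-$2$ value; one must also be careful that the displayed identity holds as an analytic identity on the whole disk $|z|<1$, not merely at $z=\tfrac12$, so that it may legitimately be evaluated at $z=x$. Once that identity is secured, the transcendence conclusion is immediate.
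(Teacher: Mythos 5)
Your proof is correct, and at the decisive step it takes a genuinely different --- and in fact sounder --- route than the paper. The paper's proof also adapts the Allouche--Cosnard argument, but it tries to get the generating-function identity for free: it sets $A(z)=\sum_{k\ge1}(1-c_k)z^k$ and $B(z)=(a_1z+\cdots+a_pz^p)\,\Xi(z^{p+1})$, where the $a_i$ are the binary digits of $1-d_0(\eta)$, notes that Equation \eqref{transc} gives $A(\tfrac12)=B(\tfrac12)$, and concludes that $A=B$ \emph{as formal power series} ``by uniqueness of the binary expansion'' of the irrational number $1-\tau_\infty(\eta)$; it then evaluates at $z=1/q$ and invokes Mahler, exactly as you do. You instead derive the identity honestly from the substitution structure, via the recursion $P_{j+1}=(1-u_j)P_j+\frac{u_j(1-u_j)}{1-z}$ and the telescoping of $\sum_i u_i/\Pi_i$ (both of which check out, as does the limit passage on $|z|<1$), and only then specialize to $z=1/q$. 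This extra work is not wasted: the paper's shortcut is unjustified as written, because the coefficients of $B$ lie in $\{-1,0,1\}$ rather than $\{0,1\}$ --- e.g.\ for $\eta=\epsilon$, reading the paper's definition with the $p+1$ binary digits that $1-d_0(\eta)$ actually requires, one gets $B(z)=z\,\Xi(z)=z-z^2-z^3+z^4-\cdots$ --- so $B(\tfrac12)$ is not a binary expansion and uniqueness of binary expansions cannot be invoked; in fact $A\neq B$ as power series. Your closed form shows that the correct relation is
$$A(z)=\frac{2z-1}{2(1-z)}+\left(\frac{1-z^{p+1}}{2(1-z)}-P_0(z)\right)\Xi\left(z^{p+1}\right),$$
whose ``extra'' term $\frac{2z-1}{2(1-z)}$ vanishes exactly at $z=\tfrac12$ --- which explains why the paper's numerical identity at $\tfrac12$ holds even though the formal identity fails, and why that identity cannot simply be transported to $z=1/q$. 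Your derivation supplies the identity that is actually needed (Proposition \ref{xi} being its evaluation at $z=\tfrac12$), after which the nonvanishing-denominator check and the Mahler step go through as in the paper; in this sense your argument not only differs from the paper's proof but repairs it.
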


\begin{proof}
Let $\eta$ be a finite binary word of length $p$, and write $\tau_\infty(\eta) = \sum_{k \geq 1} c_k 2^{-k}$ and $1-d_0(\eta) = \sum_{i=1}^p a_i 2^{-i}$, with $c_k, a_i \in \{0, 1\}$. Define the formal power series $A(z):= \sum_{k \geq 1} (1-c_k)z^k$ and $B(z) := (a_1 z + \dots + a_p z^p)\Xi(z^{p+1})$. By Equation 
\eqref{transc} $A(\frac{1}{2}) = B(\frac{1}{2})$, hence $A(z) = B(z)$ as formal power series, since $1 - \tau_\infty(\eta)$ has a unique binary 
expansion being transcendental. As a consequence, $A(\frac{1}{q}) = B(\frac{1}{q})$, and since $\sum_{k \geq 1} c_k q^{-k} =1$, 
$A(\frac{1}{q}) = \frac{1}{q-1} -1$. If $q$ is algebraic, then $A(\frac{1}{q})$ is algebraic, hence also $B(\frac{1}{q})$ and 
$\Xi(\frac{1}{q^{p+1}})$ are algebraic, contradicting Mahler's criterion (\cite{M}, page 363).
\end{proof}

\noindent
From our construction it follows that rational elements of $\Lambda$
correspond to algebraic elements in $ \mathcal U$ (and are in fact
dense in $ \mathcal U$ \cite{dV}); rephrasing the question posed at
the end of Section \ref{pedou}, one might ask whether these are the
only non-transcendental elements of $\mathcal U$.


\begin{remark} 
Other geometric properties of the set $\mathcal{U}$ also follow easily from our correspondence.
Let $u:\Lambda \setminus \mathbb{Q}_1 \to [1,2]$ be the map which associates 
to $\tau =\sum_{k\geq 1} c_k \,2^{-k}$ the unique value $q$ which
satisfies the relation \eqref{eq:duality}. 
It is not difficult to check that $u$ is locally H\"older-continuous
and extends to a homeomorphism between $\Lambda'$ and the closure $\overline{\mathcal{U}}$
of $\mathcal{U}$ in $[1, 2]$, hence $\overline{\mathcal{U}}$ is also a Cantor set.
Moreover, one can prove that the H\"older exponent of $u$ restricted to the segments
$\Lambda_K$ (considered in Section \ref{HD}) is greater than the
Hausdorff dimension of $\Lambda_K$, thus $dim_H\ u(\Lambda_K)<1$ for all
$K \geq 1$ and $\overline{\mathcal{U}}$ has zero Lebesgue measure.


\end{remark}

\end{document}